\documentclass[a4paper,reqno,10pt]{amsart}

\usepackage[english]{babel}
\usepackage[T1]{fontenc}
\usepackage[utf8]{inputenc}

\usepackage{amsmath}
\usepackage{amssymb}
\usepackage{amsthm}
\usepackage{xcolor}
\usepackage{geometry}
\usepackage{hyperref}

\newtheorem{lemma}{Lemma}[section]
\newtheorem{definition}{Definition}[section]
\newtheorem{theorem}{Theorem}[section]
\theoremstyle{remark}
\newtheorem{remark}[lemma]{Remark}

\usepackage[capitalise]{cleveref}

\newcommand{\T}{\mathbb{T}^2}
\newcommand{\dive}{\mathop{\mathrm {div}}}
\numberwithin{equation}{section}

\subjclass{Primary: 35Q35; Secondary: 76N06, 76N10.}
\keywords{Compressible Navier-Stokes equations, Uniqueness, Global solutions.}

\begin{document}

\title[Uniqueness 2D compressible Navier-Stokes equations]{Uniqueness of Solutions to the 2D Compressible Navier-Stokes Equations for Bounded Densities}

\author[Lorenzo Pescatore and Alessandro Violini]{Lorenzo Pescatore and Alessandro Violini}
\address[L. Pescatore]{DISIM - Dipartimento di Ingegneria e Scienze dell'Informazione e Matematica\\ Universit\`a  degli Studi dell'Aquila \\Via Vetoio \\ 67100 L'Aquila \\ Italy}
\email[]{\href{lorenzo.pescatore@}{lorenzo.pescatore@univaq.it}}
\address[A. Violini]{Universit\"{a}t Basel, Departement Mathematik und Informatik, Spiegelgasse 1, 4051 Basel, Switzerland}
\email[]{\href{alessandro.violini@}{alessandro.violini@unibas.ch}}
\begin{abstract}
We prove the uniqueness of global solutions to the two dimensional barotropic compressible Navier--Stokes equations within the regularity class introduced by Danchin and Mucha \cite{DM2023}, for general $C^1$ pressure laws. A crucial assumption is that both solutions have bounded divergence, locally integrable in time, which ensures that their densities remain bounded without excluding vacuum or requiring spatial regularity of the density.
\end{abstract}
\maketitle
\section{Introduction}
We consider the barotropic compressible Navier--Stokes equations on the two dimensional torus $\T$,
\begin{equation}\label{main system}
\begin{cases}
\partial_t\rho+\dive(\rho u)=0,\\
\partial_t(\rho u)+\dive(\rho u\otimes u)+\nabla P(\rho)= \mu\Delta u+(\lambda+\mu)\nabla\dive u,
\end{cases}
\end{equation}
where the shear and bulk viscosity coefficients satisfy $\mu>0 $ and $ \nu:=\lambda+2\mu>0.$ Throughout the paper, we consider pressure laws satisfying
\begin{equation}\label{pressure}
P\in C^1([0,\infty)),
\qquad
P(0)=0.
\end{equation}
We emphasize that neither monotonicity nor convexity is imposed on $P.$ This class includes, in particular, the standard isentropic pressure laws
\begin{equation}\label{press Danchin-Mucha}
P(\rho)=a\rho^\gamma,
\qquad a>0,
\qquad \gamma\ge1.
\end{equation}

Uniqueness of global solutions is a central question in the analysis of nonlinear partial differential equations and remains one of the most challenging problems in the mathematical theory of fluid mechanics. For the incompressible Navier--Stokes equations, the pioneering work of Ladyzhenskaya \cite{Ladyzhenskaya1959} established uniqueness of Leray--Hopf weak solutions \cite{Leray1934,Hopf1951} in two space dimensions.
In three dimensions, the Prodi--Serrin criteria \cite{Prodi1959,Serrin1962} yield conditional uniqueness under additional space time integrability assumptions on the velocity.
Nonuniqueness within the three dimensional Leray--Hopf class was established for suitable forcing by Albritton, Bru\`e and Colombo \cite{Albritton-Bruè-Colombo2022}, \cite{Albritton-Bruè-Colombo2023}, by exploiting the instability of self similar profiles. Recently, Hou, Wang and Yang \cite{Hou-Wang-Yang2026} have presented a computer assisted proof of nonuniqueness for the unforced system. Outside the Leray--Hopf class, convex integration yields nonuniqueness of weak solutions with finite kinetic energy \cite{BuckmasterVicol2019}, with further sharp nonuniqueness results obtained by Cheskidov and Luo \cite{CheskidovLuo2022}.

The scenario is considerably more involved for compressible fluid equations, where both existence and uniqueness face additional difficulties even in two space dimensions. The hyperbolic character of the continuity equation allows density irregularities and discontinuities to propagate, while the pressure inherits the limited spatial regularity of the density. The initial upper and positive lower bounds of the density are not automatically preserved. Moreover, the presence of vacuum prevents, in general, direct control of the velocity and naturally leads instead to estimates for density weighted quantities. These features prevent a direct extension of the incompressible theory, for both existence and uniqueness.
\medskip

The global existence theory for finite energy weak solutions to the barotropic compressible Navier--Stokes equations was initiated by Lions \cite{Lions1998} for adiabatic exponents $\gamma\ge \frac{3d}{d+2}$ in two and three space dimensions. The a priori bound obtained by Lions involves the energy
\begin{equation*}
E(t):=\int_{\T}\left(\frac12\rho|u|^2+h(\rho)\right)\,dx,
\qquad
h(\rho):=\rho\int_1^\rho\frac{P(s)}{s^2}\,ds,
\end{equation*}
which satisfies
\begin{equation}\label{energy ineq}
E(t)+\int_0^t
\left(
\mu\|\nabla\mathbb P u(s)\|_{L^2}^2
+\nu\|\dive u(s)\|_{L^2}^2
\right)\,ds
\leq E(0).
\end{equation}
Here $\mathbb P$ denotes the Leray projector, and $h$ is the internal energy per unit volume, satisfying $\rho h''(\rho)=P'(\rho)$ for $\rho>0$. Lions' existence result was  subsequently improved by Feireisl, Novotn\'y and Petzeltov\'a \cite{FNP2001}, who lowered the restriction to $\gamma>\frac d2,$ including in particular the physically relevant monoatomic exponent $\gamma=5/3$ in three dimension. A fundamental obstruction arising in these analyses is the difficulty of establishing strong compactness of the density, needed to identify the weak limit of the nonlinear pressure. To overcome this issue, the theory developed by Feireisl and collaborators makes use of  suitable truncations of the density, together with the renormalized formulation of the continuity equation. These features are further enhanced by the analysis of the effective viscous flux and the introduction of the oscillation defect measure. Beyond the classical Lions-Feireisl framework, Bresch and Jabin \cite{BJ2018} developed a quantitative compactness method covering certain non monotone pressure laws and anisotropic viscous stresses. For density dependent viscosities, which may degenerate near vacuum, the Bresch--Desjardins algebraic relation
between the shear and bulk viscosity coefficients yields additional entropy estimates and density regularity \cite{BreschDesjardins2002,BreschDesjardins2004}. Building on this structure, Bresch, Vasseur and Yu \cite{BVY2022} established global entropy-weak solutions for a broad class of such viscosities and power law pressures with any $\gamma>1$.
\medskip

Our analysis focuses on solutions whose density remains bounded on every finite time interval. We consider initial data satisfying
\begin{equation}\label{eq:initial-data}
\rho_0\in L^\infty(\T),
\qquad
\rho_0\ge0,
\qquad
u_0\in L^2(\T),
\end{equation}
thus allowing for discontinuous initial densities and vacuum regions. For simplicity, we normalize the initial mass to one, so that conservation of mass gives
\begin{equation}\label{eq:unitmass}
\int_{\T}\rho(t,x)\,dx
=\int_{\T}\rho_0(x)\,dx=1,
\qquad t\ge0.
\end{equation}

Propagating the initial $L^\infty$ bound on the density requires control beyond the basic energy estimates. To this end, a key hypothesis in our framework is $\dive u\in L^1_{\rm loc}([0,\infty);L^\infty(\T))$, which, together with the Sobolev regularity of the velocity specified below, allows us to propagate this bound through the continuity equation. This motivates the following class of solutions. 
\begin{definition}\label{def:bounded-divergence-solution}
We say that $(\rho,u)$ is a solution of \eqref{main system} with initial data \eqref{eq:initial-data} if $\rho\ge0$, $(\rho,u)$ solves \eqref{main system} in the sense of distributions and satisfies
\begin{equation*}
\rho\in L^\infty_{\rm loc}([0,\infty);L^\infty(\T)),
\qquad
\rho\in C([0,\infty);L^p(\T))\quad\forall p<\infty,
\qquad
\end{equation*}
\begin{equation*}
\sqrt{\rho}\,u\in C([0,\infty);L^2(\T)), \qquad
\nabla u\in L^2_{\rm loc}([0,\infty);L^2(\T)),
\qquad
\dive u\in L^1_{\rm loc}([0,\infty);L^\infty(\T)).
\end{equation*}
In addition, we require \eqref{energy ineq} to hold with equality
for every $t\ge0$.
\end{definition}

This class is natural from the viewpoint of renormalized transport theory: the Sobolev regularity of the velocity and the integrable bound on its divergence are precisely the velocity assumptions of the DiPerna--Lions framework \cite{DiPernaLions1989} (see also \cite{Ambrosio2004}). We use the corresponding renormalized density evolution, which also allows us to transport nonlinear functions of the density. In particular, the pressure satisfies
\[
\partial_tP(\rho)+\dive\bigl(P(\rho)u\bigr)
=\bigl(P(\rho)-\rho P'(\rho)\bigr)\dive u.
\]

The divergence bound also controls compression and propagates the initial upper bound on $\rho$. Indeed, if $X$ is the associated regular Lagrangian flow, its Jacobian factor is given by
\begin{equation}\label{eq:jacobian-flow}
J(t,\xi):=\exp\left(\int_0^t\dive u(s,X(s,\xi))\,ds\right).
\end{equation}
This factor describes the change in volume along the flow: compression increases the density, while expansion decreases it. The integrable bound on the divergence ensures that $J$ remains bounded above and away from zero on every finite time interval. Conservation of mass along the flow gives
\begin{equation}\label{eq:lagrangian-density}
\rho(t,X(t,\xi))J(t,\xi)=\rho_0(\xi).
\end{equation}
Consequently, the initial upper bound on the density propagates according to
\begin{equation}\label{eq:density-Linfty-bound}
\|\rho(t)\|_{L^\infty}
\leq\|\rho_0\|_{L^\infty}
\exp\left(\int_0^t\|\dive u(s)\|_{L^\infty}\,ds\right).
\end{equation}
Thus the density and pressure remain bounded on every finite time interval, and the effective viscous flux satisfies
\begin{equation}\label{eq:effective-flux}
G:=\nu\dive u-P(\rho)
\in L^1_{\rm loc}([0,\infty);L^\infty(\T)).
\end{equation}
These transport properties provide the starting point for our analysis.

Existence of solutions of the type defined in \cref{def:bounded-divergence-solution} was proved by Danchin and Mucha in \cite{DM2023} under additional Sobolev regularity of the initial velocity $u_0$.

\begin{theorem}[\cite{DM2023}]\label{thm:DM}
Assume that the pressure $P$ satisfies \eqref{pressure} and, in addition, $\rho \to \rho^{-1} P(\rho)$ is non decreasing and non negative. Let $(\rho_0,u_0)$ satisfy \eqref{eq:initial-data}. Fix some $K>0$ and assume in addition that
\begin{equation}\label{eq:DM-div-smallness}
u_0 \in H^1(\T), \qquad \|\dive u_0\|_{L^2}
\le
K\nu^{-1/2}.
\end{equation}
Then, there exists a positive number $\nu_0>0$, depending only on the data, the pressure law, $K$ and $\mu$, such that, if $\nu\ge\nu_0$, system \eqref{main system} admits a global in time solution (in the sense of \cref{def:bounded-divergence-solution}) satisfying in addition the following regularity estimates
\begin{equation}\label{eq:stronger-u2-regularity}
u \in L^\infty_{\rm loc}([0,\infty);H^1(\T)), \qquad 
\sqrt{\rho}\,\dot u, \; 
\sqrt t\,\nabla\dot u
\in L^2_{\rm loc}([0,\infty);L^2(\T)).
\end{equation}
\end{theorem}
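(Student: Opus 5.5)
This is the Danchin--Mucha existence theorem, so the plan follows their strategy. I would construct smooth approximate solutions and pass to the limit. The core is an a priori estimate, uniform in the approximation, closed by a bootstrap that exploits the large bulk viscosity $\nu$.

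First I would regularize the data: mollify $\rho_0$, add a small positive constant to avoid vacuum, and mollify $u_0$ keeping \eqref{eq:DM-div-smallness}. Classical theory then gives local smooth solutions, and they are global as long as the density stays bounded above and below. The main work is three a priori bounds depending only on the data, $K$, $\mu$, and $\nu\ge\nu_0$.
\begin{itemize}
\item The energy equality \eqref{energy ineq}.
\item A Hoff-type $H^1$ estimate: test the momentum equation with $\dot u=\partial_t u+u\cdot\nabla u$ to get $\sup_t(\mu\|\nabla\mathbb P u\|_{L^2}^2+\nu\|\dive u\|_{L^2}^2)+\int\|\sqrt\rho\,\dot u\|_{L^2}^2$.
\item A time-weighted estimate on $\sqrt t\,\nabla\dot u$, obtained by applying $\partial_t+\dive(\cdot\,u)$ to the momentum equation and testing with $t\dot u$.
\end{itemize}
Throughout I would write the momentum equation as $\rho\dot u=\nabla G+\mu\nabla^\perp\omega$, with $G$ as in \eqref{eq:effective-flux}. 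Elliptic estimates then give $\|\nabla G\|_{L^2}+\mu\|\nabla\omega\|_{L^2}\lesssim\|\rho\dot u\|_{L^2}$.

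Second, I would close the density bound by a bootstrap. Assume $\|\rho\|_{L^\infty}\le 2\rho^*$ on $[0,T]$. Write $\dive u=\nu^{-1}(G+P(\rho))$. Along trajectories, the continuity equation becomes $\frac{d}{dt}\log\rho=-\nu^{-1}(G+P(\rho))$. The pressure term has a sign, since $P\ge0$ by the assumption on $\rho^{-1}P(\rho)$, and helps. It remains to show $\nu^{-1}\int_0^T\|G\|_{L^\infty}\,dt$ is small. In 2D I would use $\|G\|_{L^\infty}\lesssim\|G\|_{L^2}^{1/2}\|\nabla G\|_{L^4}^{1/2}$, or a log-type inequality, together with $\|\nabla G\|_{L^4}\lesssim\|\rho\dot u\|_{L^4}$. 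The time-weighted bound controls $\dot u$ in $L^2_t H^1$ after $t>0$, and the small-time part is handled by the $\sqrt t$ weight and Hölder. The smallness of $\nu\|\dive u_0\|_{L^2}^2\le K^2$ keeps the Hoff functional bounded independently of $\nu$, so all bounds carry a factor $\nu^{-\alpha}$. For $\nu\ge\nu_0$ this yields $\|\rho\|_{L^\infty}\le\tfrac32\rho^*$, which closes the bootstrap.

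The same computation gives $\dive u\in L^1_{\rm loc}(L^\infty)$. For global-in-time control I would use that the monotonicity of $\rho^{-1}P(\rho)$ gives damping, making the estimates uniform on each $[0,T]$ or exponentially controlled. Lower bounds of the regularized density also follow from $\dive u\in L^1(L^\infty)$, so the approximate solutions are global.

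Finally, I would pass to the limit. The uniform bounds give weak-$*$ convergence of $\rho$ and strong convergence of $u$ in $L^2_{\rm loc}$ by Aubin--Lions. Strong convergence of $\rho$ in $C(L^p)$ follows from DiPerna--Lions stability, since $\nabla u\in L^2$ and $\dive u\in L^1(L^\infty)$ uniformly. This identifies $P(\rho)$ and shows the energy equality holds. The \textbf{main obstacle} is the bootstrap step: obtaining an $L^1_tL^\infty_x$ bound on $G$ with a negative power of $\nu$, uniformly despite vacuum. Vacuum means only $\sqrt\rho\,\dot u$ is controlled, not $\dot u$. I would handle this by estimating $\|\dot u\|_{L^p}$ through $\|\nabla\dot u\|_{L^2}$ plus the mean of $\rho\dot u$, using unit mass \eqref{eq:unitmass}.
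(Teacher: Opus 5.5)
The paper does not prove this theorem. It quotes it from Danchin--Mucha \cite{DM2023} and uses it only to supply the comparison solution in \cref{thm:uniqueness}, so there is no in-paper proof to compare with. On its own terms, your architecture is the natural one: regularized data, energy/Hoff/time-weighted bounds kept uniform in $\nu$ by $\nu\|\dive u_0\|_{L^2}^2\le K^2$, a density bootstrap, then compactness. There are, however, two genuine gaps.

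\textbf{The density bootstrap does not close as stated.} With $G=\nu\dive u-P$, periodicity gives $\int_{\T}\dive u=0$, hence $\bar G=-\bar P$ and $\|G\|_{L^\infty}\ge\bar P$. For $P=a\rho^\gamma$ with unit mass, Jensen gives $\bar P\ge a$, so
\[
\nu^{-1}\int_0^T\|G\|_{L^\infty}\,dt\ge \frac{aT}{\nu}.
\]
This is small only for $T\ll\nu$. Dropping the pressure term ``because $P\ge0$'' therefore yields a threshold $\nu_0(T)$, not a global solution with $\nu_0$ independent of $T$. ``Exponentially controlled'' bounds cannot close a bootstrap that needs a fixed density bound for all times.

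One way to repair it is to keep the pressure as damping. Set $\tilde G:=G+\bar P$, which has zero mean; along trajectories $\frac{d}{dt}\log\rho=-\nu^{-1}\tilde G-\nu^{-1}(P(\rho)-\bar P)$. Unit mass and monotonicity of $\rho^{-1}P$ give $\bar P=\int\rho\,\rho^{-1}P(\rho)\le P(M)/M$, where $M(t):=\|\rho(t)\|_{L^\infty}\ge1$ and $\rho^*\ge1$. At a maximum point of the smooth approximate density, once $M\ge2\rho^*$,
\[
\frac{d^+}{dt}\log M\le\nu^{-1}\bigl(\|\tilde G\|_{L^\infty}-c_*\bigr),\qquad c_*:=\tfrac12P(2\rho^*),
\]
which is positive, e.g., for \eqref{press Danchin-Mucha}. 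Suppose you have $\int_{t_1}^{t_2}\|\tilde G\|_{L^\infty}\le C(1+\sqrt{t_2-t_1})$, with $C$ independent of $t_1,t_2,\nu$. Then $M\le2\rho^*e^{C'/\nu}$ for all times, which closes a bootstrap such as $M\le4\rho^*$. That bound requires the Hoff and time-weighted estimates on $\mathbb{R}_+$, with weight $\min\{t,1\}$, not merely on $[0,T]$. This is where the hypothesis on $\rho^{-1}P$ does concrete work; you mention damping but never turn it into an estimate. Separately, $\|f\|_{L^\infty}\lesssim\|f\|_{L^2}^{1/2}\|\nabla f\|_{L^4}^{1/2}$ is false in 2D by scaling. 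The correct inequality for mean-free $f$ is $\|f\|_{L^\infty}\lesssim\|f\|_{L^2}^{1/3}\|\nabla f\|_{L^4}^{2/3}$.

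\textbf{Compactness of the density does not follow from DiPerna--Lions stability.} Uniform bounds on $\nabla u_n$ in $L^2$ and on $\dive u_n$ in $L^1(L^\infty)$ are not enough. That theorem needs the zero-order coefficient $\dive u_n$ to converge strongly. With only weak convergence, oscillations persist: take $u_n=n^{-1}(\sin(nx_1),0)$, for which $u_n\to0$ uniformly but $\rho_n(t)\approx1-t\cos(nx_1)$ converges only weakly. Here $\dive u_n=\nu^{-1}(G_n+P(\rho_n))$, so strong convergence of $\dive u_n$ is equivalent to strong convergence of the pressure, and the argument is circular.

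The missing input is compactness of the effective flux $G_n$, which holds because $\|\nabla G_n\|_{L^2}\lesssim\|\rho_n\dot u_n\|_{L^2}$. From there you can close in either of two ways:
\begin{itemize}
\item Lions' defect-measure argument, using that $P$ is nondecreasing.
\item A Lagrangian route: pass to the limit in $\dot J_n=\nu^{-1}J_n\bigl(G_n(X_n)+P(\rho_0^n/J_n)\bigr)$ by Gronwall, using stability of the flows and the Lipschitz bound on $J\mapsto JP(\rho_0/J)$ from the proof of \cref{lem:stability}.
\end{itemize}
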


We also mention the recent result \cite{LeiXiong2025}, which extends \cref{thm:DM} by removing the quantitative bound on the initial divergence in \eqref{eq:DM-div-smallness}, while still requiring sufficiently large bulk viscosity.
\\

We refer to solutions satisfying \eqref{eq:stronger-u2-regularity} as \emph{Danchin--Mucha solutions}. Danchin and Mucha proved uniqueness within their solution class for linear pressure laws and conjectured its extension to convex and strictly increasing pressure laws satisfying \eqref{pressure}. Our aim is to establish a uniqueness result for general pressure laws satisfying \eqref{pressure}, where one solution is in the larger class of \cref{def:bounded-divergence-solution} and the other one is a Danchin--Mucha solution.

The different regularity assumptions on the two solutions suggest a comparison with the classical weak--strong uniqueness framework.
For the compressible Navier--Stokes equations, such results ensure that a weak solution coincides with a sufficiently regular strong solution with the same initial data for as long as the latter exists; see Germain \cite{Germain2011}, Feireisl, Jin and Novotn\'y \cite{FJN2012}, and Feireisl \cite{Feireisl2019} for non monotone pressure laws. These results rely on relative energy estimates and require regularity of the strong solution's density that excludes general bounded, discontinuous densities. They therefore do not cover our setting, even when both solutions are Danchin--Mucha solutions.

Uniqueness for compressible flows with discontinuous density has also been studied in the intermediate regularity classes introduced by Hoff \cite{Hoff1995,Hoff2006}, between the finite energy weak framework \cite{Lions1998,FNP2001} and the classical strong solution framework \cite{Nash1962}. For linear pressure laws, uniqueness is obtained through Lagrangian arguments and estimates in negative Sobolev spaces. For general pressure laws, Danchin, Fanelli and Paicu \cite{DFP2020} established local well posedness for densities close to a positive constant under a striated regularity assumption. This assumption controls derivatives along suitable nondegenerate families of directions while allowing discontinuities, and yields an integrable Lipschitz bound on the velocity. Liao and Zodji \cite{LZ2025} subsequently obtained global well posedness in the whole space for sufficiently large bulk viscosity and pressure laws $P\in C^2$, allowing vacuum while retaining striated regularity of the density. For density dependent viscosities, Zodji \cite{Zodji2026} proved global existence and uniqueness in two dimensions for densities bounded away from vacuum and piecewise H\"older continuous across a regular interface, under suitable smallness assumptions.
\medskip

Our result requires no directional regularity of the density and establishes uniqueness in the class of \cref{def:bounded-divergence-solution}, provided that one solution satisfies \eqref{eq:stronger-u2-regularity}. In particular, the other solution need only satisfy the bounds in that definition, including $L^1$ integrability in time of $\|\dive u\|_{L^\infty}$. More precisely, we prove the following theorem.

\begin{theorem}\label{thm:uniqueness}
Let $(\rho_1,u_1)$ and $(\rho_2,u_2)$ be two solutions of \eqref{main system}, in the sense of \cref{def:bounded-divergence-solution}, with the same initial data $(\rho_0,u_0)$ and pressure law satisfying \eqref{pressure}. Assume in addition that $(\rho_2,u_2)$ satisfies \eqref{eq:stronger-u2-regularity}, that is, it is a Danchin--Mucha solution. Then $\rho_1=\rho_2$ and $u_1=u_2$ almost everywhere on $[0,\infty)\times\T$.
\end{theorem}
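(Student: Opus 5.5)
The plan is to combine an Eulerian relative energy estimate for the velocity difference with a Lagrangian control of the density difference. The bounded divergence of both solutions is what makes the Lagrangian description available without any spatial regularity of $\rho_0$. Set $\delta u:=u_1-u_2$ and $\delta\rho:=\rho_1-\rho_2$, and work on a fixed interval $[0,T]$. There both densities are bounded by \eqref{eq:density-Linfty-bound}, so $|P(\rho_1)-P(\rho_2)|\le L_T|\delta\rho|$ with $L_T:=\sup_{[0,M_T]}|P'|$. Here $M_T$ is the common $L^\infty$ bound on the two densities over $[0,T]$.

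\textbf{Step 1 (velocity).} Write the momentum equation of the second solution in non-conservative form $\rho_2\dot u_2-\mu\Delta u_2-(\lambda+\mu)\nabla\dive u_2+\nabla P(\rho_2)=0$, and subtract it from the first. The difference then satisfies
\[
\rho_1(\partial_t\delta u+u_1\cdot\nabla\delta u)-\mu\Delta\delta u-(\lambda+\mu)\nabla\dive\delta u+\nabla\bigl(P(\rho_1)-P(\rho_2)\bigr)=-\delta\rho\,\dot u_2-\rho_1\,\delta u\cdot\nabla u_2 .
\]
Testing with $\delta u$ (justified by the energy equality and the DiPerna--Lions renormalization) gives
\[
\tfrac12\tfrac{d}{dt}\|\sqrt{\rho_1}\delta u\|_{L^2}^2+\mu\|\nabla\delta u\|_{L^2}^2+(\lambda+\mu)\|\dive\delta u\|_{L^2}^2
\le \int(P_1-P_2)\dive\delta u+\Bigl|\int\delta\rho\,\dot u_2\cdot\delta u\Bigr|+\int\rho_1|\delta u|^2|\nabla u_2| .
\]
The last term is handled by the 2D Ladyzhenskaya inequality. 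We use $\nabla u_2\in L^\infty_tL^2$, together with a Poincar\'e--type inequality for $\delta u$ weighted by $\rho_1$, which is valid since $\int\rho_1=1$. The term with $\dot u_2$ requires $\dot u_2\in L^p$ with an integrable-in-time weight. This comes from $\sqrt t\nabla\dot u_2\in L^2L^2$ and $\sqrt{\rho_2}\dot u_2\in L^2L^2$, and the $t^{-1/2}$ singularity is integrable. Both non-pressure terms are therefore bounded by $\|\delta\rho\|$ in a suitable norm, with time-integrable coefficients.

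\textbf{Step 2 (density, Lagrangian).} The density cannot be estimated in $L^2$ Eulerianly, because $\dive(\rho_2\delta u)$ has no regularity. Instead I pass to the regular Lagrangian flows $X_1,X_2$ and the Jacobians $J_i$ from \eqref{eq:jacobian-flow}. By \eqref{eq:lagrangian-density}, $P(\rho_i\circ X_i)=P(\rho_0/J_i)$, and $\nu\,\dive u_i=G_i+P(\rho_i)$ gives the ODE
\[
\partial_tJ_i=\tfrac1\nu J_i\bigl(G_i\circ X_i+P(\rho_0/J_i)\bigr).
\]
On $\{\rho_0>0\}$ the nonlinearity is Lipschitz in $J$, with constants depending on $L_T$ and the two-sided bounds on $J_i$. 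Gronwall then yields
\[
\|J_1-J_2\|_{L^2(\rho_0d\xi)}\lesssim\int_0^t\|G_1\circ X_1-G_2\circ X_2\|_{L^2}.
\]
I split $G_1\circ X_1-G_2\circ X_2=(\delta G)\circ X_1+(G_2\circ X_1-G_2\circ X_2)$. The first piece is controlled by elliptic estimates, since $\nabla G=\rho\dot u+\mu\nabla^\perp\mathrm{curl}\,u$; in the difference equation this gives $\delta G$ in terms of $\nabla\delta u$ and lower-order terms. The second piece uses $G_2\in W^{1,p}$ for large $p$, which follows from the $\dot u_2$ bounds, together with a bound on $|X_1-X_2|$ obtained from $\partial_t(X_1-X_2)=u_1(X_1)-u_2(X_2)$. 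The pressure term in Step 1 and the $\delta\rho$ term are then rewritten in Lagrangian form through $\int(P_1-P_2)\phi\,dx$. This is compared via the push-forward by $X_1$, and produces $\|J_1-J_2\|$ and $|X_1-X_2|$ quantities.

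\textbf{Main obstacle.} The hardest point is the flow comparison, because $\nabla u_2\notin L^1L^\infty$: $\dive u_2$ contains $P(\rho_2)$, which is merely bounded. I would first try a Crippa--De Lellis type functional $\int\rho_0\log\bigl(1+|X_1-X_2|/\delta\bigr)\,d\xi$, or alternatively a Wasserstein-type distance between the measures $\rho_1$ and $\rho_2$, estimated using maximal-function bounds on $\nabla u_2\in L^2$. The aim is to close a Gronwall/Osgood inequality for
\[
\Phi(t):=\|\sqrt{\rho_1}\delta u\|_{L^2}^2+\|J_1-J_2\|^2+\mathrm{dist}(X_1,X_2),
\]
with $\Phi(0)=0$, giving $\Phi\equiv0$. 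This yields $u_1=u_2$, and then $\rho_1=\rho_2$ by \eqref{eq:lagrangian-density}.
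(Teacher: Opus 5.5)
\textbf{There is a genuine gap in Step 1.} Your Lagrangian part is essentially the paper's analysis of $x$ and $j$: the Jacobian ODE driven by $G_i$ and $P(\rho_0/J_i)$, the splitting of $G_1\circ X_1-G_2\circ X_2$, and the push-forward comparison of pressures. Step 1, however, cannot be closed as written, because you run the relative energy on $\delta u$ and keep $\int(P_1-P_2)\dive\delta u$. Carrying out your push-forward comparison with $\phi=\dive\delta u$ gives
\[
\int\bigl[P_1(X_1)J_1-P_2(X_2)J_2\bigr]\phi(X_1)\,d\xi+\int P_2(X_2)J_2\bigl[\phi(X_1)-\phi(X_2)\bigr]\,d\xi .
\]
The first integral is $\lesssim j\|\nabla\delta u\|_{L^2}$. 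The second is a pure displacement term and is $O(x)$ only against $\|\nabla\phi\|_{L^2}=\|\nabla\dive\delta u\|_{L^2}$; the displacement part of $\delta P$ is small only in $H^{-1}$. No second derivatives of $\delta u$ are available: there are none for $u_1$, and $\nabla^2u_2\notin L^2$ whenever $\nabla P_2\notin L^2$, e.g. for discontinuous $\rho_0$.

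Bounding this term by $\|\widetilde P-P_2\|_{L^2}\|\dive\delta u\|_{L^2}$, with $\widetilde P(X_1)J_1=P_2(X_2)J_2$, does not help. For a density with a jump, $\|\widetilde P-P_2\|_{L^2}^2$ is in general only of order $x$, not $x^2$, so you end with $\Phi'\lesssim\sqrt\Phi$. The same defect sits in Step 2: $\delta G=\nu\dive\delta u-\delta P$ is not controlled in $L^2$, and $\rho_1\dot u_1$ is not available for the weak solution.

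The missing idea is the pressure-corrected velocity. Split $\delta P=(P_1-\widetilde P)+(\widetilde P-P_2)$, with $\|P_1-\widetilde P\|_{L^2}\lesssim j$ and $\|\widetilde P-P_2\|_{H^{-1}}\lesssim x$. Then estimate $\delta w=\delta u-\nu^{-1}\mathcal R(\widetilde P-P_2)$ instead of $\delta u$, where $w_2=u_2-\nu^{-1}\mathcal RP_2$ and $w_1=u_1-\nu^{-1}\mathcal R\widetilde P$. This has four effects:
\begin{itemize}
\item it moves the displacement part into the unknown, with $\|\delta u-\delta w\|_{L^2}\lesssim x$;
\item it leaves only $\int(P_1-\widetilde P)\dive\delta w\le\varepsilon\|\nabla\delta w\|_{L^2}^2+Cj^2$;
\item it gives $\delta G=\nu\dive\delta w-(P_1-\widetilde P)$;
\item it yields $\nabla w_2\in L^1L^\infty$ from $\mu\Delta w_2+(\lambda+\mu)\nabla\dive w_2=\rho_2\dot u_2$.
\end{itemize}
The price is the term $\nu^{-1}\int\rho_1(D_1\widetilde\Pi-D_2\Pi_2)\cdot\delta w$, handled by the commutator identity in \cref{lemma:G2}.

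\textbf{Second gap: the time singularity is not harmless.} With $\delta\rho$ only in $H^{-1}$, the term $\int\delta\rho\,\dot u_2\cdot\delta u$ produces, after Young, $\|\dot u_2\|_{H^1}^2x^2\log(e/x^2)$. The bound $\sqrt t\,\nabla\dot u_2\in L^2L^2$ gives $t\|\dot u_2\|_{H^1}^2\in L^1$, but neither $\|\dot u_2\|_{H^1}^2$ nor even $\|\dot u_2\|_{H^1}$ lies in $L^1(0,T)$. Moreover, the term $G_2\circ X_1-G_2\circ X_2$ in the $j$ equation costs $\|\dot u_2\|_{H^1}\,j\,x(\log(e/x^2))^{3/2}$, because $\|\nabla G_2\|_{L^q}\lesssim q^{3/2}\|\dot u_2\|_{H^1}$. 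The modulus $s(\log(e/s))^{3/2}$ fails the Osgood condition.

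Hence no Gronwall or Osgood inequality closes for an unweighted functional like your $\Phi$. The paper instead uses $F=\|\sqrt{\rho_1}\delta w\|_{L^2}^2+j^2+t^{-1}x^2\log(e/x^2)$. Exploiting $x(t)\lesssim t$, this gives $F(0^+)=0$ and $F'\lesssim\Theta F\log(e/F)$ with $\Theta\in L^1(0,T_0)$.

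By contrast, the point you single out as the main obstacle is mild. Since $\dive u_2$ and $\mathrm{curl}\,u_2$ lie in $L^1L^\infty$, $u_2$ is log-Lipschitz. The flow comparison then directly gives the Osgood term $(1+\|\nabla w_2\|_{L^\infty})x^2\log(e/x^2)$, and no Crippa--De Lellis functional is needed.
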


\begin{remark}
    Some comments are in order:
    \begin{itemize}
\item [(i)]
Combining \cref{thm:DM} with \cref{thm:uniqueness} yields global well posedness in the Danchin--Mucha regime: for bounded nonnegative initial density and initial velocity satisfying \eqref{eq:DM-div-smallness}, and for sufficiently large bulk viscosity, the global Danchin--Mucha solution is unique. As a byproduct, the uniqueness conjecture formulated in \cite{DM2023} is proved in two space dimensions. Additionally, only $C^1$ regularity for $P$ is required for the uniqueness part.
\item[(ii)] The uniqueness result in \cref{thm:uniqueness} holds under weaker regularity assumptions than those of the Danchin--Mucha framework. In particular, we only require $\dive u_i\in L^1_{\rm loc}([0,\infty);L^\infty(\T))$, and impose the additional bounds \eqref{eq:stronger-u2-regularity} only on $(\rho_2,u_2)$. Moreover, the energy bounds, together with the $L^1_tL^\infty_x$ control of the divergence, are critical with respect to the formal parabolic scaling in two dimensions.
\end{itemize}
\end{remark}

\subsection{Strategy of the proof}
Throughout the paper, $(\rho_1,u_1)$ denotes a solution in the sense of \cref{def:bounded-divergence-solution}, while $(\rho_2,u_2)$ denotes a Danchin--Mucha solution, which belongs to the same class and satisfies \eqref{eq:stronger-u2-regularity}. Both solutions have the same initial data. We write
\begin{align}\label{eq:notation}
P_i:=P(\rho_i),\qquad
\delta\rho:=\rho_1-\rho_2,\qquad
\delta u:=u_1-u_2,\qquad
\delta P:=P_1-P_2.
\end{align}
The kinetic relative energy formally satisfies
\begin{equation}\label{eq:kinetic-comparison-balance}
\begin{aligned}
\frac{d}{dt}\frac12\int_{\T}\rho_1|\delta u|^2\,dx
&+\mu\|\nabla\delta u\|_{L^2}^2
+(\lambda+\mu)\|\dive\delta u\|_{L^2}^2\\
&=-\int_{\T}\rho_1(\delta u\cdot\nabla u_2)\cdot\delta u\,dx-\int_{\T}\delta\rho\,\dot u_2\cdot\delta u\,dx
+\int_{\T}\delta P\,\dive\delta u\,dx.
\end{aligned}
\end{equation}
The viscous terms on the left hand side control $\|\nabla\delta u\|_{L^2}^2$. The objective is to absorb suitable gradient contributions into this dissipation and control the remaining terms by an energy functional that vanishes initially, so that a Gronwall or Osgood argument yields uniqueness.

The right hand side contains three terms, corresponding to three successive difficulties. We briefly outline the treatment of each.

\medskip
\noindent
\emph{The first term: homogeneous incompressible flow.}
When both densities equal the same positive constant $\rho_*$ and $\dive u_i=0$, only the convective term remains: the density-difference term vanishes because $\delta\rho=0$, while the pressure contribution vanishes because $\dive\delta u=0$. Its control is central to Ladyzhenskaya's classical uniqueness argument for two dimensional Leray--Hopf solutions of the homogeneous incompressible Navier--Stokes equations \cite{Ladyzhenskaya1959}. The possible presence of vacuum prevents a direct application of her argument in our setting. However, the additional regularity of the comparison solution suggests using the estimate
\begin{equation}\label{eq:IfLip}
\left|\int_{\T}\rho_1(\delta u\cdot\nabla u_2)\cdot\delta u\,dx\right|
\leq \|\sqrt{\rho_1}\,\delta u\|_{L^2}^2
\|\nabla u_2\|_{L^\infty}.
\end{equation}
If $\nabla u_2\in L_{loc}^1([0,\infty);L^\infty)$, this bound would fit directly into a Grönwall argument. In our setting, the required gradient control is available for an effective velocity obtained by correcting $u_2$ with a pressure dependent term, rather than for $u_2$ itself. This motivates the change of variables introduced in \eqref{def rel differences}; the resulting convective estimate follows the same principle as \eqref{eq:IfLip}.

\medskip
\noindent
\emph{The second term: inhomogeneous incompressible flow.}
The term $\int\delta\rho\,\dot u_2\cdot\delta u$ remains in the incompressible setting when the density is variable and transported by the velocity field. Methods for controlling it have been developed both for densities bounded away from zero \cite{HaoShaoWeiZhang2025,Skondric2025} and for densities with vacuum, including characteristic functions of sets \cite{DanchinMucha2019,SkondricViolini2026}. Taking inspiration from these works, we estimate the density difference $\delta \rho$ in negative Sobolev spaces and use duality to transfer a spatial derivative onto the product $\dot u_2\cdot\delta u$ (see \eqref{eq:T1-q-initial}).

\medskip
\noindent
\emph{The third term: compressible flow.}
The pressure contribution $\int\delta P\,\dive\delta u$ vanishes in both incompressible settings and constitutes the additional difficulty considered here. The methods used for the first two terms must therefore be supplemented by a mechanism that handles this contribution. A classical approach is to supplement the kinetic relative energy
with the relative internal energy
\begin{equation*}
h(\rho_1|\rho_2)
:=h(\rho_1)-h(\rho_2)-h'(\rho_2)\delta \rho.
\end{equation*}
For smooth solutions with positive densities, the continuity
equations gives
\begin{equation}\label{eq:diffh}
\begin{split}
\frac{d}{dt}\int_{\T}h(\rho_1|\rho_2)\,dx
={}&-\int_{\T}\delta P\,\dive\delta u\,dx-\int_{\T}
\bigl[\delta P-P'(\rho_2)\delta\rho\bigr]\dive u_2\,dx
-\int_{\T}\delta\rho\,\nabla h'(\rho_2)\cdot\delta u\,dx.
\end{split}
\end{equation}
Thus, adding \eqref{eq:diffh} to
\eqref{eq:kinetic-comparison-balance} cancels the term
$\int \delta P\,\dive\delta u $.
However, even when considering increasing pressure laws, the remaining terms introduce two difficulties. First, the last integral involves $\nabla h'(\rho_2)$, which cannot be controlled in our regularity framework.Second, even if $P$ is convex, its $C^1$ regularity and the boundedness of the densities do not in general imply the estimate
\begin{equation*}
|P(\rho_1|\rho_2)|\le C h(\rho_1|\rho_2),
\qquad
P(\rho_1|\rho_2):=\delta P-P'(\rho_2)\delta\rho,
\end{equation*}
which holds for the power laws \eqref{press Danchin-Mucha}.
For nonmonotone pressure laws, a further difficulty arises since  $h$ need not be convex and thus the relative internal energy may be negative. These difficulties prevent a direct application of the classical relative energy argument. We therefore pursue a kinetic relative-energy approach and seek a different way to control the pressure contribution $\int \delta P\,\dive\delta u$. Two obstacles arise.

First, estimating the pressure difference in $L^2$ gives
\[
\left|\int_{\T}\delta P\,\dive\delta u\,dx\right|
\leq\varepsilon\|\nabla\delta u\|_{L^2}^2
+C_\varepsilon\|\delta P\|_{L^2}^2,
\]
but the kinetic relative energy does not control $\|\delta P\|_{L^2}^2$.

Second, estimating $\delta P$ in $H^{-1}$ would instead require control of $\dive\delta u$ in $H^1$, hence second spatial derivatives of the velocity. Such regularity is not supplied by the momentum equation when the density, and therefore the pressure, may be discontinuous.

Our strategy first separates the pressure difference into a compression part controlled in $L^2$ and a displacement part controlled in $H^{-1}$. This splitting then guides the choice of the corrected velocities: the displacement contribution is incorporated into the velocity difference, while the compression contribution remains as a pressure force that can be absorbed into the dissipation.

\medskip
\noindent
\emph{Separating compression from displacement.}
Let $X_i$ be the flow of $u_i$ and $J_i$ its Jacobian factor. Our aim is to control $\delta P$ in $H^{-1}$ through the displacement and compression of the two flows. Motivated by the continuity identity \eqref{eq:lagrangian-density}, we introduce
\begin{equation}\label{def x and j}
x^2(t):=\int_{\T}|X_1(t,\xi)-X_2(t,\xi)|^2\rho_0(\xi)\,d\xi,\qquad
j^2(t):=\int_{\T}|J_1(t,\xi)-J_2(t,\xi)|^2\rho_0(\xi)\,d\xi.
\end{equation}
To see how these quantities enter the estimate, take $\varphi\in H^1(\T)$, change variables along each flow, and add and subtract the crossed term $\int_{\T}P_2(X_2)J_2\varphi(X_1)\,d\xi$. Suppressing the time variable, we obtain
\begin{align*}
\int_{\T}\delta P\,\varphi\,dx
&=\int_{\T}P_1(X_1)J_1\varphi(X_1)\,d\xi
-\int_{\T}P_2(X_2)J_2\varphi(X_2)\,d\xi\\
&=\int_{\T}\bigl[P_1(X_1)J_1-P_2(X_2)J_2\bigr]\varphi(X_1)\,d\xi+\int_{\T}P_2(X_2)J_2\bigl[\varphi(X_1)-\varphi(X_2)\bigr]\,d\xi.
\end{align*}
By \eqref{eq:lagrangian-density}, the pressure weights in the first integral depend on the same initial density and on the respective Jacobians. This contribution therefore measures the difference in compression and is bounded by $Cj\|\varphi\|_{L^2}$. In the second integral, the pressure weight is fixed and only the position of the test function changes. Combining the transport argument of \cite{LOEPER200668} with the maximal function estimates of \cite{CripDel08}, we bound this contribution by $Cx\|\nabla\varphi\|_{L^2}$.

To simplify the exposition, we give the crossed pressure weight an Eulerian representation by defining the intermediate pressure $\widetilde P$ through
\begin{equation}\label{int pressure}
\widetilde P(t,X_1(t,\xi))J_1(t,\xi)
=P_2(t,X_2(t,\xi))J_2(t,\xi).
\end{equation}
The crossed term then equals $\int \widetilde P\,\varphi $, and the two contributions above become $\int(P_1-\widetilde P)\varphi $ and $\int(\widetilde P-P_2)\varphi$, respectively. Thus
\[
\delta P=(P_1-\widetilde P)+(\widetilde P-P_2)
\]
expresses the compression and displacement contributions in Eulerian form. More precisely, \cref{lem:stability} gives
\begin{equation}\label{eq:uniq-overview-pressure-stability}
\|P_1-\widetilde P\|_{L^2}\lesssim j,\qquad
\|\widetilde P-P_2\|_{H^{-1}}\lesssim x.
\end{equation}
In particular, $\|\delta P\|_{H^{-1}}\lesssim x+j$, while the separate estimates retain the stronger $L^2$ control of the compression contribution.

\medskip
\noindent
\emph{Choosing the corrected velocities.}
We now use this splitting to address the lack of second spatial derivatives of the velocity. Let $\mathbb P$ be the Leray projection and set $G_i:=\nu\dive u_i-P_i$, where $\nu=\lambda+2\mu$. The elliptic estimates associated with the momentum equation give
\[
\|\nabla^2\mathbb P u_i\|_{L^2}
+\|\nabla G_i\|_{L^2}
\lesssim \|\rho_i\dot u_i\|_{L^2},
\]
whenever the right hand side is finite. These estimates control the solenoidal part of the velocity and the effective viscous flux, but not $\nabla^2u_i$: recovering the full velocity would require a derivative of $P_i$. This motivates a pressure correction, following the effective velocity approach of \cite{Haspot2011}.

Define
\[
\mathcal Rf:=\nabla\Delta^{-1}(f-\bar f),\qquad
\Pi_2:=\mathcal RP_2,\qquad
\widetilde\Pi:=\mathcal R\widetilde P,
\]
where $\bar f$ denotes the spatial mean, and set
\begin{equation}\label{def rel differences}
w_2:=u_2-\frac1\nu\Pi_2,\qquad
w_1:=u_1-\frac1\nu\widetilde\Pi,\qquad
\delta w:=w_1-w_2.
\end{equation}
These two corrections serve different purposes. For the Danchin--Mucha solution, subtracting $\nu^{-1}\Pi_2$ cancels the pressure gradient in the elliptic equation:
\[
\mu\Delta w_2+(\lambda+\mu)\nabla\dive w_2
=\rho_2\dot u_2.
\]
Consequently,
\[
\|\nabla^2w_2\|_{L^q}
\leq C_q\|\rho_2\dot u_2\|_{L^q},
\qquad 2<q<\infty;
\]
see \cref{lem:elliptic-estimates}. In particular, the regularity of $\dot u_2$ provides control of $\nabla w_2$ in $L^\infty$ without requiring spatial derivatives of the pressure. The corrected relative energy argument therefore involves $\nabla w_2$ in place of $\nabla u_2$.

For the first solution, using $\widetilde P$ in the correction precisely incorporates the displacement part of the pressure difference into the velocity difference. Indeed,
\begin{equation}\label{eq:deltawDef}
\delta u=\delta w+\frac1\nu\mathcal R(\widetilde P-P_2),
\qquad
\|\delta u-\delta w\|_{L^2}\lesssim x,
\end{equation}
by \eqref{eq:uniq-overview-pressure-stability} and the boundedness of $\mathcal R:H^{-1}\to L^2$. The compression part $P_1-\widetilde P$ remains in the pressure force, where its $L^2$ control allows us to pair it with a single derivative of $\delta w$.

More precisely, writing $D_i:=\partial_t+u_i\cdot\nabla$, \cref{lemma:rel-energy} gives
\begin{equation}\label{eq:uniq-overview-relative-energy}
\begin{aligned}
\frac12\frac{d}{dt}\|\sqrt{\rho_1}\,\delta w\|_{L^2}^2
+c\|\nabla\delta w\|_{L^2}^2
&\leq -\int_{\T}\delta\rho\,\dot u_2\cdot\delta w\,dx -\int_{\T}\rho_1(\delta u\cdot\nabla w_2)\cdot\delta w\,dx\\
&\quad+\int_{\T}(P_1-\widetilde P)\dive\delta w\,dx-\frac1\nu\int_{\T}\rho_1(D_1\widetilde\Pi-D_2\Pi_2)\cdot\delta w\,dx,
\end{aligned}
\end{equation}
for some $c>0$ depending on the viscosities. The remaining pressure force is now estimated directly:
\[
\left|\int_{\T}(P_1-\widetilde P)\dive\delta w\,dx\right|
\lesssim j\|\nabla\delta w\|_{L^2}
\leq \varepsilon\|\nabla\delta w\|_{L^2}^2+C_\varepsilon j^2.
\]
The density and convective terms are treated by relative energy arguments using the regularity of $\dot u_2$ and $w_2$, together with \eqref{eq:deltawDef}. The last term is the additional cost of the pressure correction. Its material derivatives are decomposed in \eqref{eq:direct-material-splitting} and estimated in \cref{lemma:G2}, giving
\[
\left|\frac1\nu\int_{\T}\rho_1(D_1\widetilde\Pi-D_2\Pi_2)\cdot\delta w\,dx\right|
\leq \varepsilon\|\nabla\delta w\|_{L^2}^2
+C_\varepsilon\|\sqrt{\rho_1}\,\delta w\|_{L^2}^2
+C_\varepsilon\bigl(1+\|\sqrt{\rho_2}\dot u_2\|_{L^2}^2\bigr)x^2.
\]

It remains to control the two flow quantities $x$ and $j$ and close the argument. By \cref{lem:flowStability},
\begin{align}
\frac{d}{dt}x^2
\lesssim{}&
x\|\sqrt{\rho_1}\,\delta w\|_{L^2}+x^2
+\bigl(1+\|\nabla w_2\|_{L^\infty}\bigr)x^2\log\frac{e}{x^2},
\label{eq:uniq-overview-displacement}\\
\frac{d}{dt}j^2
\lesssim{}&
\varepsilon\|\nabla\delta w\|_{L^2}^2
+\bigl(1+\|G_1\|_{L^\infty}\bigr)j^2
+\|\dot u_2\|_{H^1}\,jx\left(\log\frac{e}{x^2}\right)^{3/2}.
\label{eq:uniq-overview-compression}
\end{align}
We must now combine the relative energy inequality with the estimates for $x^2$ and $j^2$. The main obstruction to simply adding these three inequalities is the last term in \eqref{eq:uniq-overview-compression}. Indeed, setting
\[
L:=\|\sqrt{\rho_1}\,\delta w\|_{L^2}^2+x^2+j^2,
\]
we obtain, for sufficiently small $L$,
\begin{equation}\label{eq:32}
jx\left(\log\frac{e}{x^2}\right)^{3/2}
\lesssim L\left(\log\frac{e}{L}\right)^{3/2}.
\end{equation}
The resulting modulus does not satisfy the Osgood criterion, since
\[
\int_0^\delta
\frac{ds}{s\bigl(\log(e/s)\bigr)^{3/2}}<\infty
\]
for every sufficiently small $\delta>0$. The exponent $3/2$ in \eqref{eq:32} could be reduced to $1$ if one could establish
\[
\sqrt{t}\,G_2\in L_{\mathrm{loc}}^1([0,\infty);LL),
\]
where $LL$ denotes the space of log-Lipschitz functions. This condition has the same scaling as the known regularity $u_2\in L^1(0,T;LL)$; see \cite[Corollary~1.1]{DM2023}. However, we do not know how to obtain the corresponding estimate for $G_2$. The logarithmic exponent greater than $1$ therefore prevents us from closing a direct Osgood argument for $L$.

A further difficulty arises near the initial time: the available regularity is $\sqrt{t}\,\dot u_2\in L^2(0,T;H^1)$ (see \eqref{eq:acceleration-Poincare}, which does not imply $\dot u_2\in L^1(0,T;H^1)$. Although these two conditions have the same scaling in time, the latter cannot be deduced by the Cauchy--Schwarz inequality, since $\int_0^T t^{-1}\,dt=\infty$.

To handle both issues, we use the time-dependent functional
\[
F(t):=\|\sqrt{\rho_1}\,\delta w(t)\|_{L^2}^2+j^2(t)
+\frac{x^2(t)}{t}\log\frac{e}{x^2(t)},
\]
for sufficiently small $t>0$, with the last term defined to be zero when $x(t)=0$. The weighted displacement term compensates for the initial time singularity and allows the mixed compression displacement term to be estimated with an Osgood modulus and an integrable time dependent coefficient. Section~4 establishes the resulting differential inequality, verifies that $F(t)\to0$ as $t\downarrow0$, and concludes that $F\equiv0$, yielding uniqueness.

\subsection{Notation and conventions}
Beyond the standard notation used in the literature, we assume the following conventions. Given two functions $f$ and $g,$ the symbol $f \lesssim g$ denotes the existence of a generic constant $c>0$ such that $f \le c g.$ Unless otherwise specified, all the considered integrals are taken over $\T.$ For simplicity we assume $|\T|=1,$ while $dx$ is omitted from spatial integrals. 
We fix an arbitrary time $T>0$. All implicit constants
may depend on $T$. Shorter time intervals will be introduced explicitly when needed. Products involving logarithmic factors are interpreted at zero by continuous extension whenever this extension exists.
\subsection{AI declaration}
The authors asked a large language model (LLM) whether an approach inspired by \cite{HanKwanMiotMoussaMoyano2020} could be adapted to estimate the pressure difference $\|P_1-P_2\|_{H^{-1}}$ for nonlinear pressure laws. In response, the LLM generated from scratch an initial version of the statement and proof of \cref{lem:stability}, introducing the quantity $j$, which plays a key role in the analysis. The authors subsequently revised the statement and simplified the proof by introducing the additional decomposition involving $\widetilde P$. The authors take full responsibility for the content of this work.
\subsection{Organization of the paper}
The outline of the paper is the following. Section \ref{sec pressure} is devoted to the derivation of suitable pressure estimates, together with the analysis of the functionals $x$ and $j.$ In Section \ref{sec relative energy} we establish a relative energy inequality. Section \ref{sec proof of the main result} contains the proof of our main result, \cref{thm:uniqueness}.
\section{Pressure estimates}\label{sec pressure}
We start our analysis with the following lemma providing estimates of pressure differences in negative Sobolev spaces.
\begin{lemma}\label{lem:stability}
Assume the pressure law $P(\rho)$ satisfies \eqref{pressure} and let the intermediate pressure $\widetilde{P}$ be defined by \eqref{int pressure}. Then the following estimates hold for almost every $t\in(0,T)$.
\begin{align}\label{eq:tildastab}
\|P_1-\widetilde P\|_{L^2}
\lesssim j,
\qquad
\|\widetilde P-P_2\|_{H^{-1}}
\lesssim x,
\end{align}
and
\begin{align}\label{eq:rhoStab}
\|\rho_1-\rho_2\|_{H^{-1}}
\lesssim x.
\end{align}
The implicit constants depend on the uniform $L^\infty$ bounds for
$\rho_1,\rho_2$, on the uniform upper and lower bounds for $J_1,J_2$, and on $P$ through its $C^1$.
\end{lemma}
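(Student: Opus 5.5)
We argue by duality throughout, changing variables along the regular Lagrangian flows $X_1,X_2$. These are measure-preserving up to the Jacobian factors: for $f\in L^1$ we have $\int f\,dx=\int f(X_i(\xi))J_i(\xi)\,d\xi$. By \eqref{eq:density-Linfty-bound} and the $L^1_tL^\infty_x$ bound on $\dive u_i$, the $J_i$ are bounded above and below on $[0,T]$.

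\emph{First estimate.} By definition \eqref{int pressure}, $(P_1-\widetilde P)(X_1)J_1=P(\rho_1(X_1))J_1-P(\rho_2(X_2))J_2$. Using \eqref{eq:lagrangian-density} we write $\rho_i(X_i)=\rho_0/J_i$, so this equals $\Phi(\xi,J_1)-\Phi(\xi,J_2)$ with $\Phi(\xi,s):=P(\rho_0(\xi)/s)\,s$. Since $\partial_s\Phi=P(\rho_0/s)-(\rho_0/s)P'(\rho_0/s)$, the mean value theorem gives
\[
\bigl|\Phi(\xi,J_1)-\Phi(\xi,J_2)\bigr|\le \bigl|P(r)-rP'(r)\bigr|\,|J_1-J_2|
\]
for some $r$ between $\rho_0/J_1$ and $\rho_0/J_2$. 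Because $P\in C^1$ with $P(0)=0$, we have $|P(r)-rP'(r)|\le 2r\sup_{[0,R]}|P'|\lesssim \rho_0$, where $R$ bounds the densities. Changing variables,
\[
\|P_1-\widetilde P\|_{L^2}^2=\int |(P_1-\widetilde P)(X_1)|^2J_1\,d\xi\lesssim \int \rho_0^2|J_1-J_2|^2\,d\xi\lesssim \|\rho_0\|_{L^\infty}\,j^2 .
\]
The factor $\rho_0$, rather than a constant, is what makes the weight in $j$ come out correctly, and this is why $P(0)=0$ is needed.

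\emph{Second and third estimates.} For $\varphi\in H^1$ we have
\[
\int(\widetilde P-P_2)\varphi=\int P_2(X_2)J_2\bigl[\varphi(X_1)-\varphi(X_2)\bigr]d\xi .
\]
Note that $P_2(X_2)J_2=P(\rho_0/J_2)J_2$ vanishes where $\rho_0=0$ and is bounded by $C\rho_0$. Similarly,
\[
\int(\rho_1-\rho_2)\varphi=\int\rho_0\bigl[\varphi(X_1)-\varphi(X_2)\bigr]d\xi .
\]
Both reduce to the key bound
\[
\int \rho_0\,|\varphi(X_1)-\varphi(X_2)|\,d\xi\lesssim x\,\|\nabla\varphi\|_{L^2}.
\]
We use the pointwise inequality of Crippa--De Lellis \cite{CripDel08}: for a.e. $y,z$,
\[
|\varphi(y)-\varphi(z)|\lesssim |y-z|\bigl(M\nabla\varphi(y)+M\nabla\varphi(z)\bigr),
\]
on the torus with $|y-z|$ the periodic distance, where $M$ is the Hardy--Littlewood maximal function. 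Then Cauchy--Schwarz gives
\[
\int\rho_0|\varphi(X_1)-\varphi(X_2)|\,d\xi\lesssim \Bigl(\int\rho_0|X_1-X_2|^2\Bigr)^{1/2}\sum_i\Bigl(\int\rho_0\,(M\nabla\varphi)^2(X_i)\,d\xi\Bigr)^{1/2}.
\]
The first factor is $x$. For the second, we change variables back: $\int\rho_0 (M\nabla\varphi)^2(X_i)\,d\xi=\int \rho_i (M\nabla\varphi)^2\,dx\le\|\rho_i\|_{L^\infty}\|M\nabla\varphi\|_{L^2}^2\lesssim\|\nabla\varphi\|_{L^2}^2$ by the $L^2$-boundedness of $M$. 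Taking the supremum over $\|\varphi\|_{H^1}\le1$ yields both $H^{-1}$ bounds.

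\emph{Main obstacle.} The delicate point is justifying the Lagrangian representations for merely Sobolev fields with bounded divergence. We need the a.e. definedness of the compositions $\varphi(X_i)$ and $(M\nabla\varphi)(X_i)$, and the change-of-variables formula with Jacobian $J_i$. This follows from the DiPerna--Lions/Ambrosio theory: the regular Lagrangian flow has compressibility bounded by the $J_i$ bounds, so it maps null sets to null sets. Uniqueness of renormalized solutions identifies $\rho_i(t,X_i)J_i=\rho_0$. With these in hand, the computations above are routine.
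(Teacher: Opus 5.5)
Your proposal is correct and follows the paper's proof essentially step by step. You treat the compression term by the mean value theorem in the Jacobian variable using $|P(r)-rP'(r)|\lesssim r$, which is where $P(0)=0$ enters; you treat the displacement terms by duality, the bound $|P_2(X_2)J_2|\lesssim\rho_0$, the Crippa--De Lellis maximal function inequality, Cauchy--Schwarz, and a change of variables back to Eulerian coordinates. The only cosmetic difference is that the paper obtains $\|\rho_1-\rho_2\|_{H^{-1}}\lesssim x$ by specializing the second estimate to $P(\rho)=\rho$, for which $\widetilde P=\rho_1$, whereas you write out the same computation directly.
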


\begin{proof}
We first show how \eqref{eq:tildastab} implies \eqref{eq:rhoStab}. If $P(\rho)=\rho$, then $\widetilde P=\rho_1$ therefore \eqref{eq:rhoStab} follows from the second estimate on \eqref{eq:tildastab}. It remains to prove \eqref{eq:tildastab}. We begin with $P_1-\widetilde P$. Conservation of mass gives
\begin{equation}\label{eq:massT}
\rho_i(X_i)J_i=\rho_0.
\end{equation}
Hence, by the definition of $\widetilde P$,
\[
J_1(P_1-\widetilde P)(X_1)
=
J_1P\left(\frac{\rho_0}{J_1}\right)
-
J_2P\left(\frac{\rho_0}{J_2}\right).
\]
For each particle label, we hold $\rho_0$ fixed. Since $P(0)=0$ and $P$ is $C^1$, we have $|P(r)-rP'(r)|\leq Cr$. Differentiating with respect to $J$ and using the uniform lower bound on the Jacobians, we obtain
\[
\left|\frac{d}{dJ}
\left[JP\left(\frac{\rho_0}{J}\right)\right]\right|
=
\left|P\left(\frac{\rho_0}{J}\right)
-\frac{\rho_0}{J}P'\left(\frac{\rho_0}{J}\right)\right|
\lesssim \rho_0.
\]
The mean value theorem therefore yields
\begin{align}\label{eq:noConv}
    |J_1(P_1-\widetilde P)(X_1)|  = \left| J_1P\left(\frac{\rho_0}{J_1}\right)
-
J_2P\left(\frac{\rho_0}{J_2}\right)\right|\lesssim \rho_0|J_1-J_2|.
\end{align}
Changing variables through $X_1$ and using $\rho_0/J_1=\rho_1(X_1)$, we conclude that
\[
\begin{aligned}
\|P_1-\widetilde P\|_{L^2}^2
=\int 
\frac{|J_1(P_1-\widetilde P)(X_1)|^2}{J_1} \lesssim\int 
\rho_0\,\rho_1(X_1)|J_1-J_2|^2 
\leq \|\rho_1\|_{L^\infty}j^2.
\end{aligned}
\]
This proves the first estimate in \eqref{eq:tildastab}. We estimate $\widetilde P-P_2$. For any $\varphi\in H^1$, by the definition of $\widetilde P$,
\begin{align*}
\int
(\widetilde P-P_2)\varphi\,dx
&=
\int
P_2(X_2)J_2
\bigl[\varphi(X_1)-\varphi(X_2)\bigr]\,d\xi 
\end{align*}
Since $P(0)=0$ and the density is uniformly bounded,
\[
|P_2(X_2)J_2|
=
\left|
J_2P\left(\frac{\rho_0}{J_2}\right)
\right|
\lesssim
\rho_0.
\]
Hence, using the Sobolev maximal function inequality,
\begin{align*}
\left|
\int
(\widetilde P-P_2)\varphi\,dx
\right|
&\lesssim
\int
\rho_0|X_1-X_2|
\Bigl(
\mathcal M(|\nabla\varphi|)(X_1)
+
\mathcal M(|\nabla\varphi|)(X_2)
\Bigr)\,d\xi
\\
&\lesssim
\|\sqrt{\rho_0}(X_1-X_2)\|_{L^2}
\sum_{i=1}^2
\|\sqrt{\rho_0}\,
\mathcal M(|\nabla\varphi|)(X_i)\|_{L^2}\lesssim
x\,\|\nabla\varphi\|_{L^2}.
\end{align*}
which proves \eqref{eq:tildastab} and completes the proof.
\end{proof}
We continue our analysis with the following lemma.
\begin{lemma}\label{lem:flowStability}
For every $\varepsilon>0$, the following inequalities hold
for almost every $t\in(0,T)$ such that $x(t)\le e^{-2}$. 
\begin{align}
\frac{d}{dt}x^2
\lesssim\;&
x\|\sqrt{\rho_1}\,\delta w\|_{L^2}
+x^2
+
\bigl(1+\|\nabla w_2\|_{L^\infty}\bigr)
x^2\log\frac{e}{x^2},
\label{eq:xStability}
\\
\frac{d}{dt}j^2
\lesssim\;&
\varepsilon\|\nabla\delta w\|_{L^2}^2
+
\bigl(1+\|G_1\|_{L^\infty}\bigr)
j^2+
\|\dot u_2\|_{H^1}\,
j\,x
\left(
\log\frac{e}{x^2}
\right)^{3/2}.
\label{eq:jStability}
\end{align}
\end{lemma}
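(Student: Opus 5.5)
The plan is to differentiate $x^2$ and $j^2$ along the flows, using $\partial_tX_i=u_i(X_i)$ and $\partial_tJ_i=\dive u_i(X_i)\,J_i$. Every Lagrangian integral weighted by $\rho_0$ will then be turned into an Eulerian one via $\rho_0=\rho_1(X_1)J_1$ (or the analogue for $X_2$). This identity makes $\rho_0\,d\xi$ comparable to Lebesgue measure after the change of variables, since the densities and $J_i^{\pm1}$ are bounded.

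\emph{Estimate for $x^2$.} We have $\frac{d}{dt}x^2=2\int\rho_0(X_1-X_2)\cdot(u_1(X_1)-u_2(X_2))\,d\xi$. Using \eqref{def rel differences}, we split
\[
u_1(X_1)-u_2(X_2)=\delta w(X_1)+\bigl[w_2(X_1)-w_2(X_2)\bigr]+\tfrac1\nu(\widetilde\Pi-\Pi_2)(X_1)+\tfrac1\nu\bigl[\Pi_2(X_1)-\Pi_2(X_2)\bigr].
\]
\begin{itemize}
\item By Cauchy--Schwarz, the first piece gives $x\bigl(\int\rho_0|\delta w(X_1)|^2\bigr)^{1/2}=x\|\sqrt{\rho_1}\delta w\|_{L^2}$.
\item The second piece is bounded by $\|\nabla w_2\|_{L^\infty}x^2$.
\item For the third piece we use $\|\mathcal R(\widetilde P-P_2)\|_{L^2}\lesssim\|\widetilde P-P_2\|_{H^{-1}}\lesssim x$ from \cref{lem:stability}, and obtain $\lesssim x^2$.
\item For the last piece, $P_2\in L^\infty$ makes $\Pi_2=\nabla\Delta^{-1}(P_2-\bar P_2)$ log-Lipschitz: $|\Pi_2(a)-\Pi_2(b)|\lesssim|a-b|\log(e/|a-b|)$ for small $|a-b|$, with the torus distance understood. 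Hence this piece is bounded by $\int\rho_0\,s\log(e/s)$ with $s=|X_1-X_2|^2$, up to constants.
\end{itemize}
Since $\rho_0\,d\xi$ is a probability measure and $s\mapsto s\log(e/s)$ is concave and increasing on the relevant range, Jensen's inequality gives $\lesssim x^2\log(e/x^2)$. The restriction $x\le e^{-2}$, after capping $s$ by a bounded concave extension, is what makes this step legitimate. This yields \eqref{eq:xStability}.

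\emph{Estimate for $j^2$.} We have $\frac{d}{dt}j^2=2\int\rho_0(J_1-J_2)\bigl[(J_1-J_2)\dive u_1(X_1)+J_2(\dive u_1(X_1)-\dive u_2(X_2))\bigr]$.
\begin{itemize}
\item The first part is $\lesssim\|\dive u_1\|_{L^\infty}j^2\lesssim(1+\|G_1\|_{L^\infty})j^2$.
\item In the second part write $\nu\dive u_i=G_i+P_i$. The pressure contribution is $P(\rho_0/J_1)-P(\rho_0/J_2)$, which is $O(|J_1-J_2|)$ by the $C^1$ bound and the bounds on $J_i$; it gives $j^2$.
\item For the flux we write $G_1(X_1)-G_2(X_2)=(G_1-G_2)(X_1)+[G_2(X_1)-G_2(X_2)]$.
\end{itemize}
For the Eulerian difference, note that $\dive\mathcal Rf=f-\bar f$ together with \eqref{eq:deltawDef} gives
\[
G_1-G_2=\nu\dive\delta w-(P_1-\widetilde P)+c(t),
\]
where the constant $c(t)$ is a mean bounded by $\|P_1-\widetilde P\|_{L^2}$. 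By \cref{lem:stability} its $L^2$ norm is $\lesssim\|\nabla\delta w\|_{L^2}+j$. Cauchy--Schwarz against $\rho_0|J_1-J_2|$ then gives $\varepsilon\|\nabla\delta w\|_{L^2}^2+C_\varepsilon j^2$.

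\emph{Main obstacle: the displacement term $G_2(X_1)-G_2(X_2)$.} Here I would use the pointwise inequality
\[
|f(a)-f(b)|\lesssim|a-b|\bigl(\mathcal M|\nabla f|(a)+\mathcal M|\nabla f|(b)\bigr),
\]
and then bound $\int\rho_0|J_1-J_2||X_1-X_2|\,g(X_i)$ by $j\bigl(\int\rho_0|X_1-X_2|^2g(X_i)^2\bigr)^{1/2}$, where $g=\mathcal M|\nabla G_2|$. From $\nabla G_2=\mathbb Q(\rho_2\dot u_2)$ (the gradient part of the momentum equation) and \cref{lem:elliptic-estimates}, the Riesz constant grows like $q$ and the embedding $H^1\subset L^q$ costs $\sqrt q$. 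Hence $\|g\|_{L^{q}}\lesssim q^{3/2}\|\dot u_2\|_{H^1}$ for large $q$, using the bound on $\rho_2$ and a Poincaré-type control of $\dot u_2$. Hölder with exponents $(q',q)$, the boundedness of $|X_1-X_2|$, and the choice $q\simeq\log(e/x^2)$ then give
\[
\int\rho_0|X_1-X_2|^2g^2\lesssim x^{2/q'}q^3\|\dot u_2\|_{H^1}^2\lesssim x^2\bigl(\log\tfrac{e}{x^2}\bigr)^3\|\dot u_2\|_{H^1}^2,
\]
because $x^{-2/q}=O(1)$ for this choice. Taking the square root yields the last term of \eqref{eq:jStability}. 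The delicate bookkeeping is the $q$-dependence of the constants, which fixes the exponent $3/2$.
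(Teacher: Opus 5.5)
Your proposal is correct and follows essentially the same route as the paper. It uses the same splittings of $\frac{d}{dt}x^2$ and $\frac{d}{dt}j^2$, \cref{lem:stability} for the $\widetilde\Pi-\Pi_2$ and $P_1-\widetilde P$ pieces, and concavity for the log-Lipschitz term. It also uses the same maximal-function and Calder\'on--Zygmund argument with $q\sim\log(e/x^2)$ for $G_2(X_1)-G_2(X_2)$. Your direct splitting $u_2=w_2+\nu^{-1}\Pi_2$ and your choice of H\"older exponents differ from the paper only cosmetically.

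One small remark: your constant $c(t)$ is in fact zero. Indeed, $\overline{\widetilde P}=\overline{P_2}$ follows from the definition \eqref{int pressure} and a change of variables, so $G_1-G_2=\nu\dive\delta w-(P_1-\widetilde P)$ exactly.
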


\begin{proof}
We first prove \eqref{eq:xStability}.
Differentiating the definition of $x^2$ and using $\partial_tX_i=u_i(X_i)$, we obtain
\begin{align*}
\frac12\frac{d}{dt}x^2
&=
\int
\rho_0\,\delta X\cdot
\bigl(u_1(X_1)-u_2(X_2)\bigr)
\\
&=
\int
\rho_0\,\delta X\cdot\delta u(X_1)
+
\int
\rho_0\,\delta X\cdot
\bigl(u_2(X_1)-u_2(X_2)\bigr)
=:S_1+S_2.
\end{align*}
Combining the Cauchy--Schwarz inequality with the definition of $w$ (see \eqref{eq:deltawDef}), we obtain
\begin{align*}
|S_1|
&\lesssim
\|\sqrt{\rho_0}\,\delta X\|_{L^2}
\|\sqrt{\rho_0}\,\delta u(X_1)\|_{L^2}
\lesssim
x
\left(
\|\sqrt{\rho_1}\,\delta w\|_{L^2}
+
\|\widetilde\Pi-\Pi_2\|_{L^2}
\right)\lesssim
x
\left(
\|\sqrt{\rho_1}\,\delta w\|_{L^2}
+x
\right),
\end{align*}
where in the last inequality we have used \eqref{eq:tildastab} and the continuity of $\mathcal{R}:H^{-1}\rightarrow L^2$. To estimate $S_2$, we recall that, by \eqref{eq:u2-loglip},
\[
|u_2(y)-u_2(z)|
\lesssim
\bigl(1+\|\nabla w_2\|_{L^\infty}\bigr)
|y-z|
\log\frac{e}{|y-z|}.
\]
Consequently, using the concavity of the function $ s\longmapsto s\log\frac{e}{\sqrt{s}}$ and the fact that $\rho_0$ has unit mass (see \eqref{eq:unitmass}), we obtain 
\begin{align*}
|S_2|
&\lesssim
\bigl(1+\|\nabla w_2\|_{L^\infty}\bigr)
\int
\rho_0\,|\delta X|^2
\log\frac{e}{|\delta X|}
\lesssim
\bigl(1+\|\nabla w_2\|_{L^\infty}\bigr)
x^2\log\frac{e}{x^2}.
\end{align*}
Combining the estimates for $S_1$ and $S_2$ yields
\eqref{eq:xStability}.

We now turn to $j$. We combine \eqref{eq:massT} with $J_i'
= J_i\,\operatorname{div}u_i(X_i)$ and $ G_i=\nu\operatorname{div}u_i-P_i,$ to obtain
\[
J_i'
=
\frac1\nu
J_iP\left(\frac{\rho_0}{J_i}\right)
+
\frac1\nu G_i(X_i)J_i.
\]
It follows that
\begin{align*}
\frac12\frac{d}{dt}j^2
&=\frac1\nu\int\rho_0\,\delta J
\left[
J_1P\left(\frac{\rho_0}{J_1}\right)
-J_2P\left(\frac{\rho_0}{J_2}\right)
\right] +\frac1\nu\int\rho_0\,\delta J
\bigl(G_1(X_1)J_1-G_2(X_2)J_2\bigr) \\ & 
=:T_1+T_2.
\end{align*}
By \eqref{eq:noConv} and using that $\rho_0\in L^\infty$ we immediately get
\[
|T_1|
\lesssim\int\rho_0^2|\delta J|^2
\le\|\rho_0\|_{L^\infty}\int\rho_0|\delta J|^2
\lesssim j^2.
\]
For $T_2$ we have (up to the harmless factor $\nu^{-1}$),
\begin{align*}
T_2
&=
\int
\rho_0\; \delta J \; 
G_1(X_1)\; \delta J +
\int
\rho_0\; \delta J  \;
J_2\,\delta G(X_1)
+
\int
\rho_0\; \delta J \;
J_2\bigl(G_2(X_1)-G_2(X_2)\bigr)
\\
&=:T_2^{(1)}+T_2^{(2)}+T_2^{(3)}.
\end{align*}
For the first term, $|T_2^{(1)}|
\lesssim
\|G_1\|_{L^\infty}j^2. $  For $T_2^{(2)}$, we have  $\delta G
=
\nu\operatorname{div}\delta w
- (P_1 - \widetilde P)$ and therefore, by \cref{lem:stability}
\begin{align*}
\|\delta G\|_{L^2}
&\lesssim
\|\nabla\delta w\|_{L^2}
+ \| P_1 - \widetilde P\|_{L^2}
\lesssim
\|\nabla\delta w\|_{L^2}
+j,
\end{align*}
Hence,
\[
|T_2^{(2)}|
\lesssim
j\bigl(
\|\nabla\delta w\|_{L^2}+j
\bigr) \le
\varepsilon\|\nabla\delta w\|_{L^2}^2
+
C_\varepsilon j^2.
\]
Finally, we estimate $T_2^{(3)}$. We claim that
\begin{align}\label{eq:T_2^3}
\|\sqrt{\rho_0}
(G_2(X_1)-G_2(X_2))\|_{L^2}
\lesssim
\|\dot u_2\|_{H^1}\,
x
\left(
\log\frac{e}{x^2}
\right)^{3/2}.
\end{align}
Assume \eqref{eq:T_2^3}, then by Cauchy--Schwarz
\[
|T_2^{(3)}|
\lesssim
\|\dot u_2\|_{H^1}\,
j\,x
\left(
\log\frac{e}{x^2}
\right)^{3/2}.
\]
Combining the estimates for $T_2^{(1)}$, $T_2^{(2)}$ and $T_2^{(3)}$ we get \eqref{eq:jStability}. To conclude the proof we are left to show \eqref{eq:T_2^3}, we first use the maximal-function inequality:
\[
|G_2(X_1)-G_2(X_2)|
\lesssim
|\delta X|
\Bigl(
\mathcal M(|\nabla G_2|)(X_1)
+
\mathcal M(|\nabla G_2|)(X_2)
\Bigr),
\]
which leads to 
\[
\|\sqrt{\rho_0}
(G_2(X_1)-G_2(X_2))\|_{L^2}
\lesssim \|\sqrt{\rho_0}
\; \delta X\|_{L^{\frac{2q}{q-2}}} \;
\|\nabla G_2\|_{L^q}
\]
for every $q>2$.
For the first term on the right-hand side, we interpolate $L^{\frac{2q}{q-2}}
=
[L^2,L^\infty]_\theta$, $\theta=\frac{2}{q}.$ Since we are on $\mathbb T^2$, we have $
\|\delta X\|_{L^\infty}\lesssim 1$,
and therefore
\[
\| \sqrt{\rho_0}\delta X\|_{L^{\frac{2q}{q-2}}}
\lesssim
\|\sqrt{\rho_0}\delta X\|_{L^2}^{\,\theta}
\|\sqrt{\rho_0}\delta X\|_{L^\infty}^{\,1-\theta}
\lesssim
x^{1-\frac2q}.
\]
To estimate $ \|\nabla G_2\|_{L^q}$ we recall that the effective viscous flux solves 
\[
\Delta G_2=\dive (\rho_2 \dot u_2),
\]
Since the $L^q$ norm of the Calder\'on--Zygmund operator $\nabla \Delta^{-1} \dive $ is bounded by $Cq$ for $q\ge2$ we get
\begin{align*}
\|\nabla G_2\|_{L^q}
&\lesssim
q\,\|\rho_2\dot u_2\|_{L^q}\lesssim
q^{3/2}\|\dot u_2\|_{H^1}.
\end{align*}
where in the last line we have used $\| f\|_{L^q} \lesssim \sqrt{q} \| f\|_{H^1}$ and $\rho_2$ is bounded.

Inserting the two estimates we have 
\begin{align}\label{eq:lastJest}
\|\sqrt{\rho_0}
(G_2(X_1)-G_2(X_2))\|_{L^2}
\lesssim x^{1-\frac2q} q^{3/2}\|\dot u_2\|_{H^1}.
\end{align}
We now minimize the function $q\longmapsto x^{1-\frac{2}{q}}q^{\frac32} $
for fixed $0<x<1$. The minimum is attained at $q=-\frac43\log x.$ Under the condition $0<x\le e^{-2}$ in the statement,
this choice satisfies $q>2$. The resulting estimate extends to $x=0$ by continuity.
\end{proof}

\section{Relative energy}\label{sec relative energy}
In this section we develop a relative energy framework. For simplicity of presentation, \cref{lemma:rel-energy} contains the relative energy inequality and the estimates of the first term in the right hand side of \eqref{eq:rel-energy}. The estimate of the second term is significantly more involved and the proof is therefore postponed to \cref{lemma:G2}.
\begin{lemma}\label{lemma:rel-energy}
The following relative energy inequality holds
for almost every $t\in(0,T)$:
\begin{equation}\label{eq:rel-energy}
\begin{split}
\frac12\frac{d}{dt}\|\sqrt{\rho_1}\delta w\|_{L^2}^2
+c\|\nabla\delta w\|_{L^2}^2
&\le \mathcal{G}_1+\mathcal{G}_2,
\end{split}
\end{equation}
where $c=\min(\mu,\nu)>0$ and
\begin{align}\label{eq:Gdef}
\begin{split}
\mathcal{G}_1
&:=
-\int \delta\rho\,\dot u_2\cdot\delta w
-\int \rho_1(\delta u\cdot\nabla w_2)\cdot\delta w
+\int (P_1-\widetilde P)\dive\delta w,
\\
\mathcal{G}_2
&:=
-\frac1\nu\int\rho_1
(D_1\widetilde\Pi-D_2\Pi_2)\cdot\delta w.
\end{split}
\end{align}
Moreover, for every $\varepsilon>0$, the following estimate holds for almost every $t\in(0,T)$ such that $x(t)\le e^{-2}$:
\begin{align}\label{eq:G1-estimate}
\begin{split}
|\mathcal{G}_1|
\le
\varepsilon\|\nabla\delta w\|_{L^2}^2
+C ( \|\nabla w_2\|_{L^\infty} \; x\; \|\sqrt{\rho_1}\delta w\|_{L^2}+ (1+ \|\nabla w_2\|_{L^\infty})
\|\sqrt{\rho_1}\delta w\|_{L^2}^2
+
\|\dot u_2\|_{H^1}^2
x^2\log\frac e{x^2}
+
j^2).
\end{split}
\end{align}
The constants depend on the uniform density and Jacobian bounds, the viscosities, the $C^1$ norm of $P$ and $\varepsilon.$
\end{lemma}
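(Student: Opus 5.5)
The proof has two parts. First we derive the identity \eqref{eq:rel-energy} by testing the difference of the momentum equations, written in terms of $w_i$, against $\delta w$. Then we bound the three terms of $\mathcal G_1$ separately, using \cref{lem:stability} and the elliptic control of $w_2$.

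\textbf{Deriving \eqref{eq:rel-energy}.} We write each momentum equation in nonconservative form, $\rho_i\dot u_i-\mu\Delta u_i-(\lambda+\mu)\nabla\dive u_i+\nabla P_i=0$. We substitute $u_2=w_2+\nu^{-1}\Pi_2$ and $u_1=w_1+\nu^{-1}\widetilde\Pi$. Since $\mathcal R f$ is a gradient, $\mu\Delta\mathcal Rf+(\lambda+\mu)\nabla\dive\mathcal Rf=\nu\nabla(f-\bar f)$. Hence the viscous operator applied to $\nu^{-1}\Pi_2$ cancels $\nabla P_2$ exactly, and applied to $\nu^{-1}\widetilde\Pi$ it leaves $\nabla(P_1-\widetilde P)$. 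We also split $\dot u_i=D_iw_i+\nu^{-1}D_i\Pi_i$ (with $\Pi_1:=\widetilde\Pi$) and write
\[
\rho_1\dot u_1-\rho_2\dot u_2=\rho_1(D_1w_1-D_2w_2)+\rho_1(\delta u\cdot\nabla w_2)+\rho_1\nu^{-1}(D_1\widetilde\Pi-D_2\Pi_2)+\delta\rho\,\dot u_2 .
\]
We test the difference against $\delta w$ and use the renormalized continuity equation for $\rho_1$ with velocity $u_1$ to get $\int\rho_1 D_1\delta w\cdot\delta w=\frac12\frac{d}{dt}\int\rho_1|\delta w|^2$. Integrating by parts gives $\mu\|\nabla\delta w\|^2+(\lambda+\mu)\|\dive\delta w\|^2\ge c\|\nabla\delta w\|^2$, and this yields \eqref{eq:rel-energy}. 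Rigor requires justifying $\delta w$ as a test function. This relies on the regularity of $u_2$, on $\widetilde P\in L^\infty$, and on a standard mollification/DiPerna--Lions commutator argument, which we treat as routine.

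\textbf{Pressure term.} Cauchy--Schwarz and \eqref{eq:tildastab} give
\[
\Bigl|\int(P_1-\widetilde P)\dive\delta w\Bigr|\le \varepsilon\|\nabla\delta w\|^2+C_\varepsilon j^2 .
\]

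\textbf{Convective term.} We write $\delta u=\delta w+\nu^{-1}\mathcal R(\widetilde P-P_2)$. The $\delta w$ part is bounded by $\|\nabla w_2\|_{L^\infty}\|\sqrt{\rho_1}\delta w\|^2$. For the remainder, $\rho_1\le C$ and \eqref{eq:deltawDef} give the bound $\|\nabla w_2\|_{L^\infty}\,x\,\|\sqrt{\rho_1}\delta w\|_{L^2}$ (keeping $\sqrt{\rho_1}$ on $\delta w$).

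\textbf{Density term (main obstacle).} By \eqref{eq:rhoStab} and duality,
\[
\Bigl|\int\delta\rho\,\dot u_2\cdot\delta w\Bigr|\lesssim x\,\|\nabla(\dot u_2\cdot\delta w)\|_{L^2}
\le x\bigl(\|\nabla\dot u_2\|_{L^{2}}\|\delta w\|_{L^\infty}+\|\dot u_2\|_{L^q}\|\nabla\delta w\|_{L^{2q/(q-2)}}\bigr).
\]
This is not quite available, since neither $\delta w\in L^\infty$ nor $\nabla\delta w$ beyond $L^2$ is controlled. To handle it:
\begin{itemize}
\item Split $\nabla(\dot u_2\cdot\delta w)=\nabla\dot u_2\,\delta w+\dot u_2\nabla\delta w$.
\item For the second piece, use $\|\dot u_2\|_{L^q}\lesssim\sqrt q\|\dot u_2\|_{H^1}$ paired against an $L^{q'}$-type bound. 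Better, move one derivative differently: estimate $\|\delta\rho\|_{H^{-1}}$ against $\|\dot u_2\cdot\delta w\|_{H^1}$, using the 2D endpoint Sobolev/Brezis--Gallouet inequality
\[
\|\delta w\|_{L^{p}}\lesssim \sqrt p\,\bigl(\|\sqrt{\rho_1}\delta w\|_{L^2}+\|\nabla\delta w\|_{L^2}\bigr),
\]
where the Poincaré-type bound is valid because $\rho_1$ has unit mass and is bounded.
\item Hölder with $\|\nabla\dot u_2\|_{L^2}\|\delta w\|_{L^{p}}$ requires a slightly higher integrability of $\nabla\dot u_2$, which we lack. Instead, use the log-refined duality of \cref{lem:stability}: that proof really gives $\int\delta\rho\,\varphi\lesssim\int\rho_0|\delta X|\,(\mathcal M|\nabla\varphi|(X_1)+\mathcal M|\nabla\varphi|(X_2))$. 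Apply it with $\varphi=\dot u_2\cdot\delta w$ and distribute the Hölder exponents as in the $q$-optimization at the end of \cref{lem:flowStability}, using $\|\delta X\|_{L^\infty}\lesssim1$.
\end{itemize}
This produces $x\sqrt{\log(e/x^2)}\,\|\dot u_2\|_{H^1}\bigl(\|\nabla\delta w\|_{L^2}+\|\sqrt{\rho_1}\delta w\|_{L^2}\bigr)$. Young's inequality then yields
\[
\varepsilon\|\nabla\delta w\|^2+\|\sqrt{\rho_1}\delta w\|^2+C\|\dot u_2\|_{H^1}^2x^2\log\frac e{x^2},
\]
matching \eqref{eq:G1-estimate}. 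The smallness $x\le e^{-2}$ is used exactly to choose the optimizing exponent $q\sim\log(1/x)>2$. Balancing the $L^p$ growth of $\delta w$ against the $x^{1-2/q}$ gain so that only a single power $\log$ (not $\log^{3/2}$) appears is the delicate step.
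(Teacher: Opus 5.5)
Your estimates are correct. For the pressure and convective terms they coincide with the paper's; the density term is where you take a different route.

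\textbf{The density term.} The paper stays Eulerian. Since $\delta\rho$ has zero mean, it writes $\int\delta\rho\,\varphi=-\int\mathcal R\delta\rho\cdot\nabla\varphi$ with $\varphi=\dot u_2\cdot\delta w$. It then interpolates the potential, $\|\mathcal R\delta\rho\|_{L^{2q/(q-2)}}\lesssim\|\mathcal R\delta\rho\|_{L^2}^{1-2/q}\|\mathcal R\delta\rho\|_{L^\infty}^{2/q}\lesssim x^{1-2/q}$, using \eqref{eq:rhoStab} as a black box together with $\|\mathcal R\delta\rho\|_{L^\infty}\lesssim\|\delta\rho\|_{L^\infty}$.

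You instead reopen the proof of \cref{lem:stability} and interpolate $\delta X$ on the Lagrangian side. Written out with $r=2q/(q-2)$ and $r'=2q/(q+2)$, your step is
\[
\int\rho_0|\delta X|\,\mathcal M|\nabla\varphi|(X_i)\,d\xi\le\Bigl(\int\rho_0|\delta X|^{r}\,d\xi\Bigr)^{1/r}\Bigl(\int\rho_i\,(\mathcal M|\nabla\varphi|)^{r'}\,dy\Bigr)^{1/r'}\lesssim x^{1-\frac2q}\,\|\nabla\varphi\|_{L^{r'}}.
\]
This uses $X_i\#(\rho_0\,d\xi)=\rho_i\,dy$, $\|\delta X\|_{L^\infty}\lesssim1$, and the boundedness of $\mathcal M$ on $L^{r'}$, which is uniform since $r'\in[4/3,2)$ for $q\ge4$. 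Then $\|\nabla\varphi\|_{L^{r'}}\lesssim\sqrt q\,\|\dot u_2\|_{H^1}\|\delta w\|_{H^1}$, which is exactly \eqref{eq:T1-q-bound}.

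Both routes use the same mechanism: the $L^\infty$ bound on the densities upgrades the $H^{-1}$ control to a $W^{-1,r}$ control with $r>2$, at cost $x^{1-2/q}$. The paper's version is shorter because it needs nothing beyond \eqref{eq:rhoStab}. Yours avoids the potential $\mathcal R\delta\rho$ at the price of the pointwise maximal-function inequality and the push-forward identity.

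The step you call delicate is not delicate. $\mathcal M$ acts on an exponent below $2$ and no Calder\'on--Zygmund operator on $L^q$ is involved, unlike in \cref{lem:flowStability}. Hence only $\sqrt q$ appears, and only one logarithm after squaring. In the final write-up, drop the abandoned pure $H^{-1}$--$H^1$ attempts. Also, the $\sqrt q$ bound is the sharp $H^1\hookrightarrow L^q$ embedding, not Brezis--Gallou\"et.

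\textbf{Two points in the derivation need fixing.}

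First, your displayed splitting double counts the convective term. Since $D_1w_1-D_2w_2=D_1\delta w+(\delta u\cdot\nabla)w_2$, the first term must be $\rho_1 D_1\delta w$, which is what you actually use afterwards.

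Second, the rigorous justification is not a routine mollification. Testing the momentum equation of $(\rho_1,u_1)$ with $\delta w$ means testing it with $u_1$ itself. The weak formulation does not permit this, and for a merely bounded, possibly vanishing and discontinuous $\rho_1$ it is essentially the energy equality. The paper avoids this by expanding $\frac12\|\sqrt{\rho_1}\delta w\|_{L^2}^2$ into three pieces:
\[
\frac12\int\rho_1|u_1|^2,\qquad -\int\rho_1u_1\cdot\Bigl(w_2+\frac{\widetilde\Pi}{\nu}\Bigr),\qquad \frac12\int\rho_1\Bigl|w_2+\frac{\widetilde\Pi}{\nu}\Bigr|^2 .
\]
It handles the first with the energy \emph{equality} built into \cref{def:bounded-divergence-solution}. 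For the other two it tests the momentum and continuity equations only with the more regular fields $w_2+\widetilde\Pi/\nu$ and $\frac12|w_2+\widetilde\Pi/\nu|^2$. You should invoke the energy equality in the same way.
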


\begin{proof}

We first perform a formal computation. Recall that $w_1$ and $w_2$ satisfy
\[
\mu\Delta w_1+(\lambda+\mu)\nabla\dive w_1
=
\rho_1\dot u_1+\nabla(P_1-\widetilde P),
\qquad
\mu\Delta w_2+(\lambda+\mu)\nabla\dive w_2
=
\rho_2\dot u_2.
\]
Subtracting the equations satisfied by $w_1$ and $w_2$, and writing
$D_i:=\partial_t+u_i\cdot\nabla$, we obtain
\begin{align*}
\rho_1 D_1\delta w
-\mu\Delta\delta w
-(\lambda+\mu)\nabla\dive\delta w
={}&
-\delta\rho\,\dot u_2
-\rho_1(\delta u\cdot\nabla)w_2
-\nabla(P_1-\widetilde P)-\frac{\rho_1}{\nu}
\bigl(D_1\widetilde\Pi-D_2\Pi_2\bigr).
\end{align*}
Taking the $L^2$ scalar product with $\delta w$ gives
\begin{align*}
\int \rho_1 D_1\delta w\cdot\delta w
&+
\mu\int |\nabla\delta w|^2
+
(\lambda+\mu)\int |\dive\delta w|^2
=
-\int \delta\rho\,\dot u_2\cdot\delta w
-\int \rho_1(\delta u\cdot\nabla w_2)\cdot\delta w
\\
&+\int(P_1-\widetilde P)\dive\delta w-\frac1\nu\int\rho_1
\bigl(D_1\widetilde\Pi-D_2\Pi_2\bigr)\cdot\delta w.
\end{align*}
For the first term, we have
\[
\int\rho_1D_1\delta w\cdot\delta w
=
\frac12\frac{d}{dt}
\|\sqrt{\rho_1}\delta w\|_{L^2}^2.
\]
Moreover, recalling that $\nu=\lambda+2\mu$, we have
\[
\mu\|\nabla\delta w\|_{L^2}^2
+
(\lambda+\mu)\|\dive\delta w\|_{L^2}^2
=
\mu\|\operatorname{curl}\delta w\|_{L^2}^2
+
\nu\|\dive\delta w\|_{L^2}^2
\ge
c\|\nabla\delta w\|_{L^2}^2,
\]
where $c=\min(\mu,\nu)>0$. 

The same identity can be obtained rigorously from the energy inequality and the weak formulations of the momentum and continuity equations, reproducing the result of the above formal computation. To this end, we expand
\[
\frac12\|\sqrt{\rho_1}\delta w\|_{L^2}^2
=
\frac12\int_{\T}\rho_1|u_1|^2\,dx
-\int_{\T}\rho_1u_1\cdot\left(w_2+\frac{\widetilde\Pi}{\nu}\right)\,dx
+\frac12\int_{\T}\rho_1\left|w_2+\frac{\widetilde\Pi}{\nu}\right|^2\,dx.
\]
The first term is handled by the kinetic energy balance, and the other two by testing the momentum and continuity equations with $w_2+\widetilde\Pi/\nu$ and $\frac12|w_2+\widetilde\Pi/\nu|^2$, respectively. The admissibility of these tests and the passage to the limit in their regularized versions follow by the same argument as in \cite[Section 3]{CrinBaratSkondricViolini2025}. Combining the resulting identities yields precisely the time integrated version of the identity derived above. The continuity
\begin{align}\label{eq:contdeltaw}
\sqrt{\rho_1}\delta w\in C([0,\infty);L^2(\T))
\end{align}
can be established by arguing as in \cite[Appendix]{SkondricViolini2026}. Together with the integrability of the terms in the relative energy identity, this allows the lower time endpoint to tend to zero. The resulting time integrated identity also shows that $t\mapsto\frac12\|\sqrt{\rho_1}\delta w(t)\|_{L^2}^2$ is locally absolutely continuous, so that its differential form holds for almost every time. We omit the details of this justification and use the differential form for readability.

We now prove \eqref{eq:G1-estimate}, starting from the first term.
Conservation of mass implies that $\delta\rho$ has zero mean, and
therefore, for every $q>2$,
\begin{align}\label{eq:T1-q-initial}
\begin{split}
\left|
\int \delta\rho\,\dot u_2\cdot\delta w
\right|
&=
\left|
\int \dive\mathcal R\delta\rho\,
\dot u_2\cdot\delta w
\right|=
\left|
\int \mathcal R\delta\rho\cdot
\nabla(\dot u_2\cdot\delta w)
\right|
\\
&\le
\|\mathcal R\delta\rho\|_{L^{\frac{2q}{q-2}}}
\left(
\|\delta w\|_{L^q}\|\nabla\dot u_2\|_{L^2}
+
\|\dot u_2\|_{L^q}\|\nabla\delta w\|_{L^2}
\right).
\end{split}
\end{align}
Now,
\[
\|\mathcal R\delta\rho\|_{L^{\frac{2q}{q-2}}}
\lesssim
\|\mathcal R\delta\rho\|_{L^2}^{1-\frac2q}
\|\mathcal R\delta\rho\|_{L^\infty}^{\frac2q}
\lesssim
\|\delta\rho\|_{H^{-1}}^{1-\frac2q}
\|\mathcal R\delta\rho\|_{L^\infty}^{\frac2q}
\lesssim
x^{1-\frac2q},
\]
where we used \eqref{eq:rhoStab} and $\|\mathcal R\delta\rho\|_{L^\infty} \lesssim\|\delta\rho\|_{L^\infty}\lesssim1$.
Moreover, by Sobolev embedding,
$\|f\|_{L^q}\le C\sqrt q\,\|f\|_{H^1}$, and therefore
\begin{align}\label{eq:T1-q-bound}
\left|
\int \delta\rho\,\dot u_2\cdot\delta w
\right|
\lesssim
\sqrt q\,x^{1-\frac2q}
\|\dot u_2\|_{H^1}\|\delta w\|_{H^1}.
\end{align}
Arguing as in the estimate of \eqref{eq:lastJest}, in our range of $x$ the function $q\mapsto\sqrt q\,x^{1-\frac2q}$ on $(2,\infty)$ attains its minimum at $q=4\log(1/x)$. Inserting this value
into \eqref{eq:T1-q-bound}, and using the weighted Poincar\'e inequality and Young's inequality, we obtain
\begin{align}\label{eq:T1-final}
\begin{split}
\left|
\int \delta\rho\,\dot u_2\cdot\delta w
\right|
&\lesssim
\|\dot u_2\|_{H^1}
x\sqrt{\log\frac e{x^2}}\,
\|\delta w\|_{H^1}
\\
&\lesssim
\|\dot u_2\|_{H^1}
x\sqrt{\log\frac e{x^2}}
\left(
\|\sqrt{\rho_1}\delta w\|_{L^2}
+
\|\nabla\delta w\|_{L^2}
\right)
\\
&\le
\varepsilon\|\nabla\delta w\|_{L^2}^2
+
C\|\sqrt{\rho_1}\delta w\|_{L^2}^2
+
C\|\dot u_2\|_{H^1}^2
x^2\log\frac e{x^2}.
\end{split}
\end{align}
For the second term, the velocity difference satisfies \eqref{eq:deltawDef} and so, by \eqref{eq:tildastab}
\begin{align}\label{eq:T2-estimate}
\begin{split}
\left|
\int \rho_1(\delta u\cdot\nabla w_2)\cdot\delta w
\right|
&\le
\|\nabla w_2\|_{L^\infty}
\|\sqrt{\rho_1}\delta u\|_{L^2}
\|\sqrt{\rho_1}\delta w\|_{L^2}
\\
&\le
C\|\nabla w_2\|_{L^\infty}
\|\sqrt{\rho_1}\delta w\|_{L^2}
\left(
\|\sqrt{\rho_1}\delta w\|_{L^2}
+
\|\widetilde\Pi-\Pi_2\|_{L^2}
\right)
\\
&\le
C\|\nabla w_2\|_{L^\infty}
\|\sqrt{\rho_1}\delta w\|_{L^2}
\left(
\|\sqrt{\rho_1}\delta w\|_{L^2}
+
\|\widetilde P-P_2\|_{H^{-1}}
\right)
\\
&\le
C\|\nabla w_2\|_{L^\infty}
\|\sqrt{\rho_1}\delta w\|_{L^2}
\left(
\|\sqrt{\rho_1}\delta w\|_{L^2}
+x
\right).
\end{split}
\end{align}
The last term is controlled directly by \eqref{eq:tildastab}:
\begin{align}\label{eq:T4-estimate}
\left|
\int(P_1-\widetilde P)\dive\delta w
\right|
&\le
\|P_1-\widetilde P\|_{L^2}
\|\dive\delta w\|_{L^2}
\le
Cj\|\nabla\delta w\|_{L^2}
\le
\varepsilon\|\nabla\delta w\|_{L^2}^2
+
C_\varepsilon j^2.
\end{align}
Combining \eqref{eq:T1-final}, \eqref{eq:T2-estimate}, and
\eqref{eq:T4-estimate} proves \eqref{eq:G1-estimate}.
\end{proof}
Now we estimate $\mathcal{G}_2$.
\begin{lemma}\label{lemma:G2}
    With the previous notation we have, for every $\varepsilon>0$ and almost every $t \in (0,T)$
    \begin{align*}
        |\mathcal{G}_2| \lesssim \big|\int\rho_1
(D_1\widetilde\Pi-D_2\Pi_2)\cdot\delta w \big|
\le{}&
\varepsilon\|\nabla\delta w\|_{L^2}^2
+C
\|\sqrt{\rho_1}\delta w\|_{L^2}^2
+
C
\left(
1+\|\sqrt{\rho_2}\dot u_2\|_{L^2}^2
\right)x^2.
    \end{align*}
\end{lemma}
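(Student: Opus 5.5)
The plan is to compute the two material derivatives explicitly from the renormalized transport equations, reorganize their difference into commutators and ``source'' terms, and estimate each term by duality against $\rho_1\delta w$. Write $T:=\nabla\Delta^{-1}\dive$, so that $\nabla\Pi=T(\cdot)$ up to means, and set $S(\rho):=P(\rho)-\rho P'(\rho)$, so that $|S(r)|\lesssim r$.

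\emph{Transport equations for the pressures.} From the renormalized continuity equation,
\[
\partial_tP_2+\dive(P_2u_2)=S(\rho_2)\dive u_2=:\Sigma_2 .
\]
Differentiating \eqref{int pressure} along $X_1$, and using $\partial_tJ_1=J_1\dive u_1(X_1)$ together with \eqref{eq:massT}, gives
\[
\partial_t\widetilde P+\dive(\widetilde Pu_1)=\widetilde\Sigma,
\qquad
\widetilde\Sigma(X_1)J_1=\Sigma_2(X_2)J_2 .
\]
Thus $\widetilde\Sigma$ is the ``intermediate'' version of $\Sigma_2$, in exactly the sense that $\widetilde P$ is the intermediate version of $P_2$. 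Applying $\mathcal R$ (the means are constants in space, and $\mathcal R$ kills them) yields
\[
D_1\widetilde\Pi=[u_1\cdot\nabla,\mathcal R]\widetilde P+\mathcal R\widetilde\Sigma,
\qquad
D_2\Pi_2=[u_2\cdot\nabla,\mathcal R]P_2+\mathcal R\Sigma_2,
\]
where $[a\cdot\nabla,\mathcal R]f:=a\cdot\nabla\mathcal Rf-\mathcal R\dive(af)$. I would then split the difference as
\[
D_1\widetilde\Pi-D_2\Pi_2
=[\delta u\cdot\nabla,\mathcal R]\widetilde P
+[u_2\cdot\nabla,\mathcal R](\widetilde P-P_2)
+\mathcal R(\widetilde\Sigma-\Sigma_2).
\]
I expect this to be the splitting referred to in \eqref{eq:direct-material-splitting}.

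\emph{Source term.} I would pair $\mathcal R(\widetilde\Sigma-\Sigma_2)$ with $\rho_1\delta w$ and move $\mathcal R$ onto the test function, $\psi:=\Delta^{-1}\dive(\rho_1\delta w)$. This gives $\int s(\xi)\,[\psi(X_1)-\psi(X_2)]\,d\xi$ with $|s|\lesssim\rho_0|\dive u_2(X_2)|$. As in \cref{lem:stability}, I would use the maximal function inequality and Hölder's inequality, changing variables separately in each factor (the Jacobians are bounded above and below):
\[
\lesssim x\,\bigl(1+\|G_2\|_{L^4}\bigr)\|\nabla\psi\|_{L^4}
\lesssim x\bigl(1+\|\sqrt{\rho_2}\dot u_2\|_{L^2}\bigr)\|\delta w\|_{H^1}.
\]
Here I write $\dive u_2=\nu^{-1}(G_2+P_2)$, and I use $\|G_2\|_{H^1}\lesssim1+\|\sqrt{\rho_2}\dot u_2\|_{L^2}$, which follows from $\Delta G_2=\dive(\rho_2\dot u_2)$ and the energy bound on the mean of $G_2$. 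Finally, the weighted Poincaré inequality and Young's inequality turn this into $\varepsilon\|\nabla\delta w\|_{L^2}^2+C\|\sqrt{\rho_1}\delta w\|_{L^2}^2+C(1+\|\sqrt{\rho_2}\dot u_2\|_{L^2}^2)x^2$.

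\emph{Commutator terms, the main obstacle.} For $[\delta u\cdot\nabla,\mathcal R]\widetilde P$, I would substitute $\delta u=\delta w+\nu^{-1}\mathcal R(\widetilde P-P_2)$ from \eqref{eq:deltawDef}.
\begin{itemize}
\item The $\delta w$ part: after integrating by parts in the pairing against $\rho_1\delta w$, I would bound it using $\|\nabla\mathcal R\widetilde P\|_{L^p}\lesssim_p1$ and $\|\widetilde P\|_{L^\infty}\lesssim1$. This gives at most $\|\delta w\|_{H^1}\|\sqrt{\rho_1}\delta w\|_{L^2}$, which is absorbed by Young's inequality.
\item The part involving $\mathcal R(\widetilde P-P_2)$, and also $[u_2\cdot\nabla,\mathcal R](\widetilde P-P_2)$: here only the $H^{-1}$ bound $\lesssim x$ of \cref{lem:stability} is available. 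The naive $L^4$ interpolation loses a power of $x$. The first thing I would try is the displacement structure itself. I would express each such term via \eqref{int pressure} as $\int P_2(X_2)J_2\,[\Phi(X_1)-\Phi(X_2)]\,d\xi$, where $\Phi$ collects $\mathcal R$-type operators applied to the test field $\rho_1\delta w$ and to $u_2$ (respectively $\mathcal R\widetilde P$). The maximal function estimate then yields $x\|\nabla\Phi\|_{L^2}$, and I would bound $\nabla\Phi$ in $L^2$ using $u_2=w_2+\nu^{-1}\Pi_2$. For this I need $\nabla w_2\in L^4$, controlled by $\|\sqrt{\rho_2}\dot u_2\|_{L^2}$ through \cref{lem:elliptic-estimates} with $q=2$ and Sobolev embedding, together with $\nabla\Pi_2\in L^p$ for all $p<\infty$.
\end{itemize}
Making this duality step rigorous without ever needing $\nabla u_2\in L^\infty$ is where I expect the real difficulty to lie.
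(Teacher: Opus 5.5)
Your splitting is the paper's. After substituting $\delta u=\delta w+\nu^{-1}Z$ with $Z:=\widetilde\Pi-\Pi_2$, the term $[\delta u\cdot\nabla,\mathcal R]\widetilde P$ becomes $I_1+I_2$. Since $\dive Z=\widetilde P-P_2$, the term $[u_2\cdot\nabla,\mathcal R](\widetilde P-P_2)=(u_2\cdot\nabla)Z-\mathcal R\dive\bigl((\dive Z)u_2\bigr)$ is $I_3$, and your source term is $I_4$. Your treatment of the source term and of the $\delta w$-part is fine.

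You have, however, misplaced the difficulty. The $Z$-part of the first commutator is harmless. No derivative falls on $Z$, and $\|Z\|_{L^2}\lesssim x$ by \cref{lem:stability}. So H\"older with $\nabla\widetilde\Pi\in L^4$ and $\rho_1\delta w\in L^4$, together with $\|\mathcal R\dive(\widetilde PZ)\|_{L^2}\lesssim\|Z\|_{L^2}$, gives $\lesssim x\|\delta w\|_{H^1}$ with no loss.

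The real obstacle is $[u_2\cdot\nabla,\mathcal R](\widetilde P-P_2)$, and there your plan has a genuine gap. Rewriting it through \eqref{int pressure} and the maximal function yields nothing beyond the $H^{-1}$ bound $|\int(\widetilde P-P_2)\Phi|\lesssim x\|\nabla\Phi\|_{L^2}$. Here $\Phi$ is the adjoint commutator, $\Phi=u_2\cdot\nabla\mathcal R^*g-\mathcal R^*\dive(g\otimes u_2)$ (up to sign conventions), with $g=\rho_1\delta w$. Each of the two pieces of $\nabla\Phi$ is an order-one operator acting on $g$, for example $\nabla\Delta^{-1}\dive\dive(g\otimes u_2)$. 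Neither is bounded in $L^2$, because $\rho_1$ is merely bounded and $g$ has no derivative. Splitting $u_2=w_2+\nu^{-1}\Pi_2$ does not change this. Only the cancellation inside the commutator helps. Quantifying it requires a Calder\'on-type commutator bound $\|\nabla\Phi\|_{L^2}\lesssim\|\nabla u_2\|_{L^4}\|g\|_{L^4}$, which your proposal does not supply.

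The missing idea is an exact algebraic identity that exploits the fact that $Z$ is a gradient field. For $Y$ a gradient,
\[
(v\cdot\nabla)Y-\mathcal Q\bigl((\dive Y)v\bigr)
=-(\nabla v)^TY+\mathcal Q\Bigl(\bigl[\nabla v+(\nabla v)^T-(\dive v)I\bigr]Y\Bigr),
\qquad \mathcal Q:=\mathcal R\dive=I-\mathbb P .
\]
It follows from four facts:
\begin{itemize}
\item $\mathbb P\mathcal Q=0$ and $\mathcal Q^2=\mathcal Q$;
\item the symmetry of $\nabla Y$, which gives $(v\cdot\nabla)Y=\nabla(v\cdot Y)-(\nabla v)^TY$, hence $\mathbb P((v\cdot\nabla)Y)=-\mathbb P((\nabla v)^TY)$;
\item the identity $\dive\bigl((v\cdot\nabla)Y-(\dive Y)v\bigr)=\dive\bigl((\nabla v-(\dive v)I)Y\bigr)$;
\item the fact that $\mathcal Q$ only sees the divergence.
\end{itemize}
All derivatives now land on $v=u_2$. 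Using only H\"older and the $L^p$-boundedness of $\mathcal Q$, this term is bounded in $L^{4/3}$ by
\[
\|\nabla u_2\|_{L^4}\|Z\|_{L^2}\lesssim\bigl(1+\|\sqrt{\rho_2}\dot u_2\|_{L^2}\bigr)x,
\]
where $\|\nabla u_2\|_{L^4}\le\|\nabla w_2\|_{L^4}+\nu^{-1}\|\nabla\Pi_2\|_{L^4}$. Pairing with $\rho_1\delta w\in L^4$ and applying Young's inequality then closes the estimate, and no $L^\infty$ bound on $\nabla u_2$ is ever needed. Dualizing this identity is exactly what proves the bound on $\nabla\Phi$ you were after, so the detour through the flow maps is unnecessary.
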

\begin{proof}
We introduce two shortcuts that will be used throughout the proof:
\[
Z:=\widetilde\Pi-\Pi_2,
\qquad
\mathcal Q:=\mathcal R\dive.
\]
Then $\|Z\|_{L^2}\lesssim x$ and $\delta u=\delta w+\nu^{-1} Z.$ Moreover, if $\mathbb P$ denotes the Leray projector, then $ \mathcal Q=I-\mathbb P,$ and hence $\mathcal Q$ is bounded on $L^p(\T)$ for every $1<p<\infty$. By the uniform bounds on the pressure and standard elliptic estimates on the torus,
\begin{equation}\label{eq:direct-potential-bounds}
|\bar P_2|
+\|\widetilde P\|_{L^\infty}
+\|\widetilde\Pi\|_{L^\infty}
+\|\nabla\widetilde\Pi\|_{L^4}
+\|\nabla\Pi_2\|_{L^4}
\lesssim1.
\end{equation}
Moreover, using \eqref{eq:direct-potential-bounds} together with \eqref{eq:elliptic-w}, we have
\begin{equation}\label{eq:direct-u2-bound}
\|\nabla u_2\|_{L^4}
\le
\|\nabla w_2\|_{L^4}
+\nu^{-1}\|\nabla\Pi_2\|_{L^4}
\lesssim
1+\|\sqrt{\rho_2}\dot u_2\|_{L^2}.
\end{equation}

\medskip
\noindent\textit{Step 1.}
For a general pressure law satisfying \eqref{pressure}, the transport equation for the pressure contains a source term. More precisely,
\begin{equation}\label{eq:P2eq}
\partial_tP_2+\dive(P_2u_2)
=-F_2,
\qquad
F_2:=
\bigl(\rho_2P'(\rho_2)-P_2\bigr)\dive u_2.
\end{equation}
Notice that $F_2$ vanishes for linear pressure laws. Similarly, the
intermediate pressure satisfies
\begin{equation}\label{eq:Ptildeeq}
\partial_t\widetilde P+\dive(\widetilde P u_1)
=-\widetilde F,
\qquad
\widetilde F(X_1)J_1=F_2(X_2)J_2.
\end{equation}
Applying $\mathcal R$ to \eqref{eq:P2eq} and \eqref{eq:Ptildeeq} gives
\begin{align*}
\partial_t\widetilde\Pi
=
-\mathcal Q(\widetilde P u_1)-\mathcal R\widetilde F,
\qquad
\partial_t\Pi_2
=
-\mathcal Q(P_2u_2)-\mathcal RF_2.
\end{align*}

Subtracting the two identities directly and using $\dive Z=\dive(\widetilde\Pi-\Pi_2)=\widetilde P-P_2$
(where we used $\overline{\widetilde P}=\overline{P_2}$), we obtain
\begin{align*}
\begin{aligned}
D_1\widetilde\Pi-D_2\Pi_2
&=
u_1\cdot\nabla\widetilde\Pi-u_2\cdot\nabla\Pi_2
-\mathcal Q(\widetilde P u_1-P_2u_2)
-\mathcal R(\widetilde F-F_2)
\\
&=
(\delta u\cdot\nabla)\widetilde\Pi
+(u_2\cdot\nabla)Z
-\mathcal Q\bigl(\widetilde P\,\delta u+(\dive Z)u_2\bigr)
-\mathcal R(\widetilde F-F_2)
\\
&=
(\delta u\cdot\nabla)\widetilde\Pi
-\mathcal Q(\widetilde P\,\delta u)
+
(u_2\cdot\nabla)Z
-\mathcal Q((\dive Z)u_2)
-\mathcal R(\widetilde F-F_2).
\end{aligned}
\end{align*}
Using now $ \delta u=\delta w+\nu^{-1} Z,$ we arrive at the decomposition
\begin{equation}\label{eq:direct-material-splitting}
\begin{aligned}
D_1\widetilde\Pi-D_2\Pi_2
=&
\underbrace{
(\delta w\cdot\nabla)\widetilde\Pi
-\mathcal Q(\widetilde P\,\delta w)
}_{I_1}+
\underbrace{
\frac1\nu
\Bigl(
(Z\cdot\nabla)\widetilde\Pi
-\mathcal Q(\widetilde P Z)
\Bigr)
}_{I_2}
\\
&+
\underbrace{
(u_2\cdot\nabla)Z
-\mathcal Q((\dive Z)u_2)
}_{I_3}
\underbrace{
-\mathcal R(\widetilde F_2-F_2)
}_{I_4}.
\end{aligned}
\end{equation}

\medskip
\noindent\textit{Step 2.} We now estimate the terms in \eqref{eq:direct-material-splitting} one by one. We shall use the following identity. If $Y$ is a gradient field, then
\begin{equation}\label{eq:direct-transport-identity}
\mathcal A(v,Y)
:=
(v\cdot\nabla)Y-\mathcal Q((\dive Y)v)
=
-(\nabla v)^T Y
+
\mathcal Q
\Bigl(
\bigl[
\nabla v+(\nabla v)^T-(\dive v)I
\bigr]Y
\Bigr).
\end{equation}
The proof of \eqref{eq:direct-transport-identity} is postponed to the end of the argument. Its main feature is that all derivatives on the right hand side fall on $v$. In particular, the $L^p$ boundedness of $\mathcal Q$ and H\"older's inequality immediately give
\begin{equation}\label{eq:direct-transport-estimate}
\|\mathcal A(v,Y)\|_{L^p}
\lesssim
\|\nabla v\|_{L^q}\|Y\|_{L^r},
\qquad
\frac1p=\frac1q+\frac1r,
\end{equation}
whenever $1<p<\infty$ and the exponents on the right-hand side are admissible.

We first estimate $I_1$. Since $\widetilde P
=
\dive\widetilde\Pi+\bar P_2, $ we may write $I_1
=
\mathcal A(\delta w,\widetilde\Pi)
-\bar P_2\,\mathcal Q\delta w.$ Applying \eqref{eq:direct-transport-estimate} with $p=q=2$ and $r=\infty$, and using \eqref{eq:direct-potential-bounds}, we obtain
\begin{align*}
\begin{split}
\|I_1\|_{L^2}
&\lesssim
\|\nabla\delta w\|_{L^2}
\|\widetilde\Pi\|_{L^\infty}
+
|\bar P_2|\,\|\mathcal Q\delta w\|_{L^2}\lesssim
\|\nabla\delta w\|_{L^2}.
\end{split}
\end{align*}
Indeed, $\mathcal Q\delta w$ has zero mean, since it is the gradient of a periodic function. Hence Poincar\'e's inequality, the commutation of $\mathcal Q$ with derivatives, and its $L^2$ boundedness give $\|\mathcal Q\delta w\|_{L^2}
\lesssim
\|\nabla\mathcal Q\delta w\|_{L^2}
\lesssim
\|\nabla\delta w\|_{L^2}. $

For $I_2$, by H\"older's inequality and the boundedness of $\mathcal Q$ on $L^{4/3}$,
\begin{align*}
\begin{split}
\|I_2\|_{L^{4/3}}
&\lesssim
\|Z\|_{L^2}\|\nabla\widetilde\Pi\|_{L^4}
+
\|\widetilde P\|_{L^\infty}\|Z\|_{L^{4/3}}
\lesssim
\|Z\|_{L^2}
\lesssim x.
\end{split}
\end{align*}
Here we used \eqref{eq:direct-potential-bounds} and the finite measure of the torus, which gives $\|Z\|_{L^{4/3}}\lesssim\|Z\|_{L^2}$.

For $I_3$ we first notice that  $Z=\widetilde\Pi-\Pi_2$ is itself a gradient field, and we may apply \eqref{eq:direct-transport-estimate} to $I_3$ with $v=u_2$, $Y=Z$, $p=4/3$, $q=4$, and $r=2$. Therefore,
\begin{align*}
\|I_3\|_{L^{4/3}}
&\lesssim
\|\nabla u_2\|_{L^4}\|Z\|_{L^2}
\lesssim
\left(
1+\|\sqrt{\rho_2}\dot u_2\|_{L^2}
\right)x,
\end{align*}
where we used \eqref{eq:direct-u2-bound} and $\|Z\|_{L^2}\lesssim x$.

\medskip 
$I_4$ requires a preliminary estimate. Indeed we first estimate $\widetilde F-F_2$ in a negative Sobolev norm in the spirit of \cref{lem:stability}. Let $\varphi$ be a smooth scalar function. Recalling that $\widetilde F(X_1)J_1=F_2(X_2)J_2$, a change of variables gives
\begin{align}\label{eq:F-difference}
\int(\widetilde F-F_2)\varphi
=
\int
F_2(X_2)J_2
\bigl(
\varphi(X_1)-\varphi(X_2)
\bigr) .
\end{align}
Since $F_2=\bigl(\rho_2P'(\rho_2)-P_2\bigr)\dive u_2$ and $|\rho P'(\rho)-P(\rho)|\lesssim \rho$ (see the proof of \cref{lem:stability}), using $\rho_2(X_2)J_2=\rho_0$ we obtain
\begin{align}\label{eq:F2-flow-bound}
|F_2(X_2)J_2|\lesssim \rho_0\,|\dive u_2(X_2)|.
\end{align}
Moreover, the Sobolev maximal-function inequality yields
\[
|\varphi(X_1)-\varphi(X_2)|
\lesssim
|X_1-X_2|
\left(
\mathcal M|\nabla\varphi|(X_1)
+
\mathcal M|\nabla\varphi|(X_2)
\right).
\]
Combining this with \eqref{eq:F-difference} and \eqref{eq:F2-flow-bound}, and applying H\"older's inequality, we obtain
\begin{align*}
\begin{split}
\left|
\int (\widetilde F-F_2)\varphi
\right|
\lesssim& \| \sqrt{\rho_0} \delta X\|_{L^2} \| \rho^{\frac{1}{4}}_0 \dive u_2(X_2) \|_{L^4}
\sum_{i=1}^2
\left(
\int
\rho_0
|\mathcal M\nabla\varphi(X_i)|^4
\right)^{1/4} \\ \lesssim& \; x \|  \dive u_2\|_{L^4}
\|\nabla\varphi\|_{L^4} \lesssim x
\left(
1+\|\sqrt{\rho_2}\dot u_2\|_{L^2}
\right)
\|\nabla\varphi\|_{L^4}
\end{split}
\end{align*}
where we have used \eqref{eq:direct-u2-bound}. We now pass to $I_4$. Let $\psi\in L^4(\T;\mathbb{R}^2)$ and $\mathcal{R}^* \psi=-\Delta^{-1} \dive \psi,$ by duality
\begin{align*}
\left| \int 
\mathcal R(\widetilde F-F_2)\cdot\psi \right|
=\left|
\int
(\widetilde F-F_2)\mathcal R^*\psi \right|  \lesssim x
\left(
1+\|\sqrt{\rho_2}\dot u_2\|_{L^2}
\right)
\| \nabla \mathcal R^*\psi\|_{L^4}
\end{align*}
Since $\nabla\mathcal R^*\psi=-\mathcal Q\psi$, taking the supremum over $\|\psi\|_{L^4}\le1$, we conclude that
\begin{align*}
\|I_4\|_{L^{4/3}}
\lesssim
x
\left(
1+\|\sqrt{\rho_2}\dot u_2\|_{L^2}
\right).
\end{align*}
Collecting the estimates for $I_1$, $I_2$, $I_3$ and $I_4$ we get
\begin{equation}\label{eq:direct-parts-summary}
\begin{aligned}
\|I_1\|_{L^2}
\lesssim
\|\nabla\delta w\|_{L^2}, \qquad 
\|I_2+I_3+I_4\|_{L^{4/3}}
\lesssim
\left(
1+\|\sqrt{\rho_2}\dot u_2\|_{L^2}
\right)x.
\end{aligned}
\end{equation}
\medskip

\noindent \textit{Step 3.} Combining \eqref{eq:Gdef} with \eqref{eq:direct-material-splitting} and \eqref{eq:direct-parts-summary} gives
\begin{align*}
|\mathcal{G}_2|
&\lesssim 
\bigg|\int\rho_1
\delta w \cdot I_1 \bigg|
+
\bigg|\int\rho_1
\delta w \cdot (I_2+I_3+I_4) \bigg|
\\
&\lesssim
\|\sqrt{\rho_1}\delta w\|_{L^2}
\bigl(\|\nabla\delta w\|_{L^2}+x\bigr)
+
\|\rho_1\delta w\|_{L^4}\,
x\left(1+\|\sqrt{\rho_2}\dot u_2\|_{L^2}\right).
\end{align*}
By the uniform bound on $\rho_1$, Sobolev embedding, and the weighted Poincar\'e inequality,
\[
\|\rho_1\delta w\|_{L^4}
\lesssim
\|\delta w\|_{H^1}
\lesssim 
\|\sqrt{\rho_1}\delta w\|_{L^2}
+
\|\nabla\delta w\|_{L^2}.
\]
Therefore, Young's inequality yields, for every $\varepsilon>0$,
\begin{align*}
|\mathcal{G}_2|
\leq{}&
\varepsilon\|\nabla\delta w\|_{L^2}^2
+
C\|\sqrt{\rho_1}\delta w\|_{L^2}^2
+
C
\left(
1+\|\sqrt{\rho_2}\dot u_2\|_{L^2}^2
\right)x^2.
\end{align*}
This concludes the proof up to proving \eqref{eq:direct-transport-identity}.

\medskip

\noindent \textit{Step 4.} To show  \eqref{eq:direct-transport-identity} we use that $\mathbb P \mathcal{Q} = 0$ and $ \mathcal{Q} \mathcal{Q} = \mathcal{Q} $ to write
\begin{align*}
\mathcal A(v,Y) & =\mathbb P [ (v\cdot\nabla)Y-\mathcal Q((\dive Y)v) ]+ \mathcal Q [ (v\cdot\nabla)Y-\mathcal Q((\dive Y)v) ]
\\ & =
\mathbb P\bigl((v\cdot\nabla)Y\bigr)
+
\mathcal Q\bigl((v\cdot\nabla)Y-(\dive Y)v\bigr).
\end{align*}
Now we study the two addends on the right hand side. For the first one, since $Y$ is a gradient field $\nabla Y$ is symmetric. This gives 
\[
\mathbb P((v\cdot\nabla)Y)
= \mathbb P(
\nabla(v\cdot Y)-(\nabla v)^T Y) =-  \mathbb P((\nabla v)^T Y).
\]
On the other hand, for the second addend we have 
\begin{align*}
\dive\bigl((v\cdot\nabla)Y-(\dive Y)v\bigr) & = \operatorname{tr}( \nabla v \nabla Y) -\dive Y \dive v
\\ 
\dive\bigl((\nabla v-(\dive v)I)Y\bigr) & =\operatorname{tr}( \nabla v \nabla Y) -\dive Y \dive v,
\end{align*}
in particular they coincide. Then, since $\mathcal Q=\nabla\Delta^{-1}\dive$ depends only on the divergence, we obtain
\[
\mathcal Q\bigl((v\cdot\nabla)Y-(\dive Y)v\bigr)
=
\mathcal Q\bigl((\nabla v-(\dive v)I)Y\bigr).
\]
Combining the previous identities proves \eqref{eq:direct-transport-identity}.
\end{proof}
\section{Proof of the main result}\label{sec proof of the main result}
Let $(\rho_1,u_1)$ and $(\rho_2,u_2)$ be two solutions of \eqref{main system}, in the sense of \cref{def:bounded-divergence-solution}, with the same initial data. Assume in addition that $(\rho_2,u_2)$ satisfies \eqref{eq:stronger-u2-regularity}. We fix an arbitrary time $T>0$. Throughout the proof, implicit constants may depend on the a priori bounds of the two solutions on $[0,T]$.

We set
\[
H^2(t):=\|\sqrt{\rho_1}\delta w(t)\|_{L^2}^2+j^2(t)
\]
and introduce the time-dependent functional
\[
F(t):=H^2(t)+\frac{x^2(t)}{t}\log\frac{e}{x^2(t)},\qquad 0<t\le T,
\]
with the displacement term defined to be zero when $x(t)=0$.

\underline{\textit{Step 1.}} We first establish the initial value of this functional. The Jacobian equation and the uniform Jacobian bounds imply
\[
\|J_i(t)-J_i(s)\|_{L^\infty}\le C_T\int_s^t\|\dive u_i(\tau)\|_{L^\infty}\,d\tau,\qquad 0\le s\le t\le T.
\]
Hence $j$ is continuous and $j(0)=0$, since $J_1(0)=J_2(0)=1$. Together with \eqref{eq:contdeltaw} and the common initial data, this gives that $H$ is continuous and $H(0)=0$. Moreover, the flow equations and the kinetic-energy bounds yield
\[
|x(t)-x(s)|\le\int_s^t\left(\|\sqrt{\rho_1}u_1(\tau)\|_{L^2}+\|\sqrt{\rho_2}u_2(\tau)\|_{L^2}\right)\,d\tau\le C_T(t-s).
\]
Thus $x$ is Lipschitz continuous, $x(0)=0$, and $x(t)\le C_Tt$. Since $r\mapsto r\log(e/r)$ is increasing near zero, it follows that, for sufficiently small $t>0$,
\[
\frac{x^2(t)}{t}\log\frac{e}{x^2(t)}\lesssim_T t\left(1+\log\frac1t\right)\longrightarrow0\qquad\text{as }t\downarrow0.
\]
Consequently, $F(t)\to0$ as $t\downarrow0$, and we define $F(0)=0$.

\underline{\textit{Step 2.}} By these limits, we may choose $0<T_0\le\min\{T,1\}$ such that
\[
x(t)\le e^{-2},\qquad 0\le F(t)\le e^{-2}\qquad\text{for every }t\in(0,T_0].
\]
In particular, $H(t)\le e^{-1}$, so that $x(t)+H(t)<1$ on this interval. All differential inequalities in the following local argument hold for almost every $t\in(0,T_0)$. Denoting by
\[
\Lambda:=1+\|\nabla w_2\|_{L^\infty},\qquad \Gamma:=\Lambda+\|G_1\|_{L^\infty}+\|\sqrt{\rho_2}\dot u_2\|_{L^2}^2,
\]
and combining the estimates of \cref{lem:flowStability,lemma:rel-energy,lemma:G2} with Young's inequality, choosing its parameters sufficiently small to absorb the gradient terms into the dissipation, we obtain
\begin{equation}\label{eq:uniqueness-ode-system}
\begin{aligned}
\frac{d}{dt}H^2
&\lesssim \Gamma H(H+x)+\left(\Gamma+\|\dot u_2\|_{H^1}^2\right)x^2\log\frac{e}{x^2}+\|\dot u_2\|_{H^1}Hx\left(\log\frac{e}{x^2}\right)^{3/2},\\
\frac{d}{dt}x^2
&\lesssim xH+x^2+\Lambda x^2\log\frac{e}{x^2}.
\end{aligned}
\end{equation}
Here we used $\|\sqrt{\rho_1}\delta w\|_{L^2}\le H$, $j\le H$, and $x^2\le x^2\log(e/x^2)$. By definition,
\begin{equation}\label{eq:uniqueness-F-bounds}
H^2\le F,\qquad x^2\log\frac{e}{x^2}\le tF,\qquad x^2\le tF.
\end{equation}
Differentiating $F$, we obtain
\[
\frac{dF}{dt}=\frac{dH^2}{dt}+\frac{\log(e/x^2)-1}{t}\frac{dx^2}{dt}-\frac{x^2}{t^2}\log\frac{e}{x^2}.
\]
Since the last term is nonpositive, we are left with
\begin{equation}\label{eq:uniqueness-F-derivative}
\frac{dF}{dt}\le\frac{dH^2}{dt}+\frac{\log(e/x^2)-1}{t}\frac{dx^2}{dt}.
\end{equation}
These computations are first performed where $x>0$.

\underline{\textit{Step 3.}} Before estimating \eqref{eq:uniqueness-F-derivative}, we notice that, for $0<x\le e^{-2}$,
\[
\log\frac{e}{x^2}=\log\frac{e}{x^2\log(e/x^2)}+\log\left(\log\frac{e}{x^2}\right)\le\log\frac{e}{x^2\log(e/x^2)}+\frac12\log\frac{e}{x^2},
\]
and hence $\log(e/x^2)\le2\log\bigl(e/[x^2\log(e/x^2)]\bigr)$. Since $s\mapsto s\log^2(e/s)$ is increasing on $(0,e^{-2}]$, and $0\le x^2\log(e/x^2)\le tF\le e^{-2}$, we conclude that
\begin{equation}\label{eq:uniqueness-log-estimate}
\begin{aligned}
x^2\left(\log\frac{e}{x^2}\right)^3
&\le4x^2\log\frac{e}{x^2}\left(\log\frac{e}{x^2\log(e/x^2)}\right)^2\\
&\le4tF\left(\log\frac{e}{tF}\right)^2
\lesssim tF\left(\log\frac{e}{F}+\log\frac{e}{t}\right)^2.
\end{aligned}
\end{equation}
All products in this estimate are interpreted by continuous extension at zero. 

For the first inequality in \eqref{eq:uniqueness-ode-system}, we use \eqref{eq:uniqueness-F-bounds} and \eqref{eq:uniqueness-log-estimate}:
\begin{equation}\label{eq:uniqueness-H-estimate}
\begin{aligned}
\frac{d}{dt}H^2
&\lesssim\Gamma H(H+x)+\left(\Gamma+\|\dot u_2\|_{H^1}^2\right)x^2\log\frac{e}{x^2}+\|\dot u_2\|_{H^1}Hx\left(\log\frac{e}{x^2}\right)^{3/2}\\
&\lesssim\Gamma\sqrt F\left(\sqrt F+\sqrt{tF}\right)+\left(\Gamma+\|\dot u_2\|_{H^1}^2\right)tF+\|\dot u_2\|_{H^1}\sqrt F\left[x^2\left(\log\frac{e}{x^2}\right)^3\right]^{1/2}\\
&\lesssim\Gamma F+t\|\dot u_2\|_{H^1}^2F+\|\dot u_2\|_{H^1}\sqrt t\,F\left(\log\frac{e}{F}+\log\frac{e}{t}\right)\\
&\lesssim F\log\frac{e}{F}\left[\Gamma+t\|\dot u_2\|_{H^1}^2+\|\dot u_2\|_{H^1}\sqrt t\left(1+\log\frac{e}{t}\right)\right],
\end{aligned}
\end{equation}
where we have used $t\le1$ and $\log(e/F)\ge1$.

Since $\log(e/x^2)-1\ge0$, we may multiply the upper bound for $(x^2)'$ in \eqref{eq:uniqueness-ode-system} by this coefficient. First, \eqref{eq:uniqueness-F-bounds} and \eqref{eq:uniqueness-log-estimate} give
\[
x^2\left(\log\frac{e}{x^2}\right)^2=\left[x^2\log\frac{e}{x^2}\right]^{1/2}\left[x^2\left(\log\frac{e}{x^2}\right)^3\right]^{1/2}\lesssim tF\left(\log\frac{e}{F}+\log\frac{e}{t}\right).
\]
Consequently,
\begin{equation}\label{eq:uniqueness-x-estimate}
\begin{aligned}
\frac{\log(e/x^2)-1}{t}\frac{d}{dt}x^2
&\lesssim\frac{\log(e/x^2)}{t}\left(xH+x^2+\Lambda x^2\log\frac{e}{x^2}\right)\\
&\lesssim\frac{\sqrt F}{t}\left[x^2\left(\log\frac{e}{x^2}\right)^2\right]^{1/2}+\frac{x^2}{t}\log\frac{e}{x^2}+\frac{\Lambda x^2}{t}\left(\log\frac{e}{x^2}\right)^2\\
&\lesssim F\sqrt{\frac{\log(e/F)+\log(e/t)}{t}}+F+\Lambda F\left(\log\frac{e}{F}+\log\frac{e}{t}\right)\\
&\lesssim F\log\frac{e}{F}\left[\frac{1+\sqrt{\log(e/t)}}{\sqrt t}+\Lambda\left(1+\log\frac{e}{t}\right)\right].
\end{aligned}
\end{equation}

Combining \eqref{eq:uniqueness-F-derivative}, \eqref{eq:uniqueness-H-estimate} and \eqref{eq:uniqueness-x-estimate}, we conclude that
\begin{equation}\label{eq:uniqueness-osgood}
\frac{dF}{dt}\lesssim F\log\frac{e}{F}\,\Theta,
\end{equation}
where
\[
\Theta:=\Gamma+t\|\dot u_2\|_{H^1}^2+\sqrt t\,\|\dot u_2\|_{H^1}\left(1+\log\frac{e}{t}\right)+\Lambda\left(1+\log\frac{e}{t}\right)+\frac{1+\sqrt{\log(e/t)}}{\sqrt t}.
\]

We now verify that $\Theta\in L^1(0,T_0)$. The assumptions and \cref{lem:poicnare,lem:elliptic-estimates} give
\[
\Gamma\in L^1(0,T),\qquad t\|\dot u_2\|_{H^1}^2\in L^1(0,T),\qquad\Lambda\in L^{6/5}(0,T).
\]
For the third and fourth terms, Young's inequality yields
\[
\sqrt t\,\|\dot u_2\|_{H^1}\left(1+\log\frac{e}{t}\right)+\Lambda\left(1+\log\frac{e}{t}\right)\lesssim t\|\dot u_2\|_{H^1}^2+\left(1+\log\frac{e}{t}\right)^2+\Lambda^{6/5}+\left(1+\log\frac{e}{t}\right)^6.
\]
Every term on the right-hand side belongs to $L^1(0,T_0)$. Finally,
\[
\int_0^{T_0}\frac{1+\sqrt{\log(e/t)}}{\sqrt t}\,dt<\infty,
\]
which proves the claimed integrability of $\Theta$. Osgood's lemma applied to \eqref{eq:uniqueness-osgood}, using $F(t)\to0$ as $t\downarrow0$, yields $F(t)=0$ for every $t\in[0,T_0]$. In particular, $x(t)=H(t)=0$ throughout this interval.

\underline{\textit{Step 4.}} We first recover the equality of the solutions on $(0,T_0)$. Since $x=0$, the estimate $\|\delta\rho\|_{H^{-1}}\lesssim x$ gives $\rho_1=\rho_2$ almost everywhere on $(0,T_0)\times\T$. Moreover, \eqref{eq:deltawDef} implies $\delta u=\delta w$. The identity $H=0$ also gives $j=0$ and $\|\sqrt{\rho_1}\delta w\|_{L^2}=0$. Using these identities in the estimates of \cref{lemma:rel-energy,lemma:G2}, and choosing their parameters sufficiently small to absorb the gradient terms, the relative energy inequality yields
\[
\int_0^{T_0}\|\nabla\delta w(t)\|_{L^2}^2\,dt=0.
\]
Thus $\delta w(t,\cdot)$ is spatially constant for almost every $t\in(0,T_0)$. Since the total mass is positive, the identity $\|\sqrt{\rho_1}\delta w\|_{L^2}=0$ forces this constant to vanish. Consequently,
\[
(\rho_1,u_1)=(\rho_2,u_2)\qquad\text{almost everywhere on }(0,T_0)\times\T.
\]

Finally, we can extend the argument to $[0,T]$ by a standard continuation argument. Since $T>0$ was arbitrary, this concludes the proof of \cref{thm:uniqueness}.

\appendix
\section{}
In this appendix, we state and give a proof of some auxiliary results used throughout the paper. The following lemma involves a weighted Poincaré inequality which is a particular case of \cite[Lemma A.2]{DM2023}.
\begin{lemma}\label{lem:poicnare}
With the notation of \eqref{eq:notation} the following estimates hold for almost every $t\in(0,T)$ and every $v\in H^1(\T)$ and $i=1,2$,
\begin{equation}\label{eq:weighted-Poincare}
\|v\|_{H^1}
\lesssim 
\|\nabla v\|_{L^2}
+\|\sqrt{\rho_i}\,v\|_{L^2},
\end{equation}
Moreover,
\begin{equation}\label{eq:acceleration-Poincare}
\|\dot u_2\|_{H^1}
\lesssim \|\nabla\dot u_2\|_{L^2}.
\end{equation}
\end{lemma}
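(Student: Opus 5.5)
The first estimate \eqref{eq:weighted-Poincare} will be proved by the standard mean-value argument, exploiting the unit mass \eqref{eq:unitmass} and the uniform bound $\|\rho_i\|_{L^\infty}\lesssim1$ on $[0,T]$ from \eqref{eq:density-Linfty-bound}. Write $\bar v$ for the spatial mean of $v$. The Poincar\'e inequality on $\T$ gives
\[
\|v-\bar v\|_{L^2}\lesssim\|\nabla v\|_{L^2}.
\]
Since $\int\rho_i=1$, we can write
\[
\bar v=\int\rho_i\,\bar v=\int\rho_i v-\int\rho_i(v-\bar v).
\]
Hence, by Cauchy--Schwarz,
\[
|\bar v|\le\|\sqrt{\rho_i}\|_{L^2}\|\sqrt{\rho_i}\,v\|_{L^2}+\|\rho_i\|_{L^\infty}\|v-\bar v\|_{L^2}\lesssim\|\sqrt{\rho_i}\,v\|_{L^2}+\|\nabla v\|_{L^2},
\]
using $\|\sqrt{\rho_i}\|_{L^2}=1$. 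Combining this with $\|v\|_{H^1}\lesssim|\bar v|+\|v-\bar v\|_{L^2}+\|\nabla v\|_{L^2}$ yields \eqref{eq:weighted-Poincare}. The constant depends only on $\|\rho_i\|_{L^\infty(0,T;L^\infty)}$, which is consistent with the paper's convention that implicit constants may depend on $T$.

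For \eqref{eq:acceleration-Poincare} I would apply \eqref{eq:weighted-Poincare} with $v=\dot u_2$ and $\rho_2$. This reduces the claim to showing
\[
\|\sqrt{\rho_2}\,\dot u_2\|_{L^2}\lesssim\|\nabla\dot u_2\|_{L^2},
\]
or more precisely to controlling the mean of $\dot u_2$ by $\|\nabla\dot u_2\|_{L^2}$. The natural route is to use the momentum equation, rewritten as
\[
\rho_2\dot u_2=\mu\Delta u_2+(\lambda+\mu)\nabla\dive u_2-\nabla P_2.
\]
The right-hand side is a divergence of a periodic field, so $\int\rho_2\dot u_2=0$ for a.e.\ $t$; this is conservation of total momentum. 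Then
\[
\int\rho_2\dot u_2=0\quad\Longrightarrow\quad \bar{\dot u}_2=-\int\rho_2\bigl(\dot u_2-\bar{\dot u}_2\bigr),
\]
and therefore $|\bar{\dot u}_2|\lesssim\|\dot u_2-\bar{\dot u}_2\|_{L^2}\lesssim\|\nabla\dot u_2\|_{L^2}$, again by Poincar\'e. Adding the oscillation part gives \eqref{eq:acceleration-Poincare}.

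The main point to check is the rigorous meaning of $\int\rho_2\dot u_2=0$ under the regularity \eqref{eq:stronger-u2-regularity}. Since $\sqrt{\rho_2}\dot u_2\in L^2_tL^2_x$ and $\rho_2$ is bounded, $\rho_2\dot u_2\in L^2_tL^2_x$. The identity $\rho_2\dot u_2=\partial_t(\rho_2u_2)+\dive(\rho_2u_2\otimes u_2)$ holds in the distributional sense, and the momentum equation equates this with a divergence-form expression. Testing the momentum equation with $\chi(t)e_k$, for $\chi\in C_c^\infty(0,T)$ and constant vectors $e_k$, shows that the spatial integral vanishes for a.e.\ $t$. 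If needed, one can instead use $\sqrt t\,\nabla\dot u_2\in L^2_tL^2_x$ so that both sides are finite for a.e.\ $t$. Everything else is routine.
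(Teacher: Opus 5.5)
Your proposal is correct and follows essentially the same route as the paper: the decomposition $\bar v=\int\rho_i v-\int\rho_i(v-\bar v)$ with the unit mass, Poincar\'e and Cauchy--Schwarz gives \eqref{eq:weighted-Poincare}, and integrating the momentum equation over the torus to get $\int\rho_2\dot u_2=0$ gives \eqref{eq:acceleration-Poincare}. Your justification of this vanishing, by testing against $\chi(t)e_k$, supplies a detail the paper leaves implicit.
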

\begin{proof}
Let $\bar v$ denote the spatial mean of $v$. Normalization of the total mass yields 
\begin{equation*} 
\bar v=\int_{\T}\rho_i v\,dx-\int_{\T}\rho_i(v-\bar v)\,dx
\end{equation*}
The usual Poincar\'e inequality therefore gives
\[
\|v\|_{H^1}
\le C\left(
\|\nabla v\|_{L^2}
+\left|\int_{\T}\rho_i v\,dx\right|
\right),
\]
and \eqref{eq:weighted-Poincare} follows by Cauchy--Schwarz.
Moreover, integrating the momentum equation in \eqref{main system} over the torus yields, for almost every time,
\[
\int_{\T}\rho_2\dot u_2\,dx
=
\int_{\T}
\left(\mu\Delta u_2+(\lambda+\mu)\nabla\dive u_2-\nabla P_2\right)\,dx
=0.
\]
Applying the previous bound to $v=\dot u_2$ proves
\eqref{eq:acceleration-Poincare}.
\end{proof}
In the next lemma we collect some properties of $w_2$ that will be used throughout the paper. Recall that
\[
w_2=u_2-\frac1\nu\Pi_2,
\qquad
\Pi_2=\nabla\Delta^{-1}(P_2-\bar P_2).
\]
All the properties below are direct consequences of the regularity estimates for $u_2$ established in \cite{DM2023}; we collect them here for later use.

\begin{lemma}\label{lem:elliptic-estimates}
With the notation of \eqref{eq:notation}, for every $2\le p<\infty$,
\begin{equation}\label{eq:elliptic-w}
\|\nabla w_2\|_{L^p}\lesssim \|\rho_2\dot u_2\|_{L^2},
\qquad
\|\nabla^2w_2\|_{L^p}\lesssim \|\rho_2\dot u_2\|_{L^p}.
\end{equation}
Moreover, there exists $r_*>1$ such that
\begin{equation}\label{eq:w2-time-Linfty}
\nabla w_2\in L^{r_*}_{\rm loc}([0,\infty);L^\infty(\T)).
\end{equation}
In particular, $u_2$ is log-Lipschitz in space and, for almost every $t>0$,
\begin{equation}\label{eq:u2-loglip}
|u_2(t,x)-u_2(t,y)|
\lesssim
\left(1+\|\nabla w_2(t)\|_{L^\infty}\right)
|x-y|\log\frac{e}{|x-y|}.
\end{equation}
\end{lemma}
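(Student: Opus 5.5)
The plan is to derive all three assertions of \cref{lem:elliptic-estimates} from the elliptic system satisfied by $w_2$ and from the Danchin--Mucha regularity \eqref{eq:stronger-u2-regularity}. Since $\Pi_2=\nabla\Delta^{-1}(P_2-\bar P_2)$, we have $\dive\Pi_2=P_2-\bar P_2$ and $\operatorname{curl}\Pi_2=0$. Hence
\[
\mu\Delta\Pi_2+(\lambda+\mu)\nabla\dive\Pi_2=\nu\nabla P_2 .
\]
Subtracting this from the momentum equation, rewritten as $\rho_2\dot u_2=\mu\Delta u_2+(\lambda+\mu)\nabla\dive u_2-\nabla P_2$, gives
\[
\mu\Delta w_2+(\lambda+\mu)\nabla\dive w_2=\rho_2\dot u_2 .
\]
This is a constant-coefficient Lamé system on $\T$. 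Splitting $w_2=\mathbb P w_2+\mathcal Q w_2$, it decouples into $\mu\Delta\mathbb Pw_2=\mathbb P(\rho_2\dot u_2)$ and $\nu\Delta\mathcal Qw_2=\mathcal Q(\rho_2\dot u_2)$. Therefore $\nabla^2 w_2$ is a sum of Calderón--Zygmund operators applied to $\rho_2\dot u_2$. Boundedness on $L^p$ for $1<p<\infty$ gives the second bound in \eqref{eq:elliptic-w}. For the first bound:
\begin{itemize}
\item[(a)] $\nabla w_2=\nabla\Delta^{-1}$ applied to $\rho_2\dot u_2$ (up to projections), and $\nabla w_2$ has zero mean.
\item[(b)] In two dimensions, $\nabla\Delta^{-1}$ maps $L^2$ into $H^1\subset L^p$ for every $p<\infty$.
\end{itemize}
This yields $\|\nabla w_2\|_{L^p}\lesssim\|\rho_2\dot u_2\|_{L^2}$.

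For \eqref{eq:w2-time-Linfty} I will use the Gagliardo--Nirenberg inequality in 2D. With $p>2$ fixed, say $p=4$, it gives
\[
\|\nabla w_2\|_{L^\infty}\lesssim\|\nabla w_2\|_{L^2}^{1/3}\|\nabla^2 w_2\|_{L^4}^{2/3}+\|\nabla w_2\|_{L^2}.
\]
By the elliptic bounds and $\rho_2\in L^\infty$, this is controlled by
\[
\|\sqrt{\rho_2}\dot u_2\|_{L^2}^{1/3}\,\|\dot u_2\|_{L^4}^{2/3}+\|\sqrt{\rho_2}\dot u_2\|_{L^2}.
\]
By Ladyzhenskaya's inequality and \eqref{eq:acceleration-Poincare},
\[
\|\dot u_2\|_{L^4}\lesssim\|\dot u_2\|_{H^1}\lesssim\|\nabla\dot u_2\|_{L^2}.
\]
Write $a(t)=\|\sqrt{\rho_2}\dot u_2\|_{L^2}\in L^2_t$ and $b(t)=\sqrt t\|\nabla\dot u_2\|_{L^2}\in L^2_t$. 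Then
\[
\|\nabla w_2\|_{L^\infty}\lesssim a+a^{1/3}b^{2/3}t^{-1/3}.
\]
Hölder in time with exponent $r$ gives $a^{1/3}b^{2/3}\in L^2$ and $t^{-1/3}\in L^s$ for $s<3$. Taking $1/r=1/2+1/s$ with $s$ close to $3$ gives some $r_*\in(1,6/5)$, locally in time. This is the step where the time weight is the real obstacle: the integrability $r_*>1$ relies on the singular weight $t^{-1/3}$ being strictly better than $t^{-1/2}$. If the interpolation exponent is chosen differently (larger $p$), the power of $t^{-1}$ approaches $1/2$, and one must check it stays integrable after Hölder. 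Indeed the paper later uses $\Lambda\in L^{6/5}$, which is what I expect this choice to produce at the endpoint, modulo the log-free $p=4$ interpolation.

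Finally, for \eqref{eq:u2-loglip}, write $u_2=w_2+\nu^{-1}\Pi_2$. The $w_2$ part is Lipschitz with constant $\|\nabla w_2\|_{L^\infty}$. For $\Pi_2=\nabla\Delta^{-1}(P_2-\bar P_2)$ with $P_2\in L^\infty$ uniformly on $[0,T]$, the classical Yudovich-type estimate shows that $\nabla\Delta^{-1}$ maps $L^\infty$ into the log-Lipschitz class on $\T$. One splits the kernel difference into $|z|\le2|x-y|$ and its complement, the latter contributing $|x-y|\log(e/|x-y|)\|P_2\|_{L^\infty}$. Since $|x-y|\le|x-y|\log(e/|x-y|)$ on the torus, summing the two contributions gives the stated bound with constant $1+\|\nabla w_2\|_{L^\infty}$.
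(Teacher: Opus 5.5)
Your argument is correct. The first part (Lam\'e decoupling via $\mathbb P$ and $\mathcal Q$, Calder\'on--Zygmund bounds, then zero mean plus $H^1\hookrightarrow L^p$) is exactly the paper's. The other two parts take slightly different routes.

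\textbf{Time integrability \eqref{eq:w2-time-Linfty}.} You use Gagliardo--Nirenberg between $\|\nabla w_2\|_{L^2}$ and $\|\nabla^2 w_2\|_{L^4}$. The paper instead interpolates $\nabla w_2$ between $W^{1,2}$ and $W^{1,4}$ and then uses $W^{1,8/3}\hookrightarrow L^\infty$. Because the paper also exploits $\|\nabla^2 w_2\|_{L^2}\lesssim\|\sqrt{\rho_2}\dot u_2\|_{L^2}$, only half a power of $\|\nabla\dot u_2\|_{L^2}$ enters. Its singular weight is therefore $t^{-1/4}$ rather than your $t^{-1/3}$, and H\"older with $t^{-1/4}\in L^3$ gives exactly $r_*=6/5$. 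This is the value quoted in Section~4 as $\Lambda\in L^{6/5}$; your version only gives every $r_*<6/5$. That is harmless: the lemma asks only for some $r_*>1$, and the Young step in Section~4 works with any exponent strictly above $1$.

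Your side remark on the choice of $p$ is backwards, though. The Gagliardo--Nirenberg exponent is $\theta=p/(2p-2)$, which decreases to $1/2$ as $p\to\infty$. Larger $p$ therefore improves the weight $t^{-\theta/2}$ toward $t^{-1/4}$, and it is $p\downarrow 2$ that degrades it to $t^{-1/2}$. Also, your $p=4$ choice does not reach the endpoint $6/5$.

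\textbf{Log-Lipschitz bound \eqref{eq:u2-loglip}.} You split $u_2=w_2+\nu^{-1}\Pi_2$ into a Lipschitz part and a Yudovich-type log-Lipschitz part. The paper instead bounds $\dive u_2$ and $\operatorname{curl}u_2$ in $L^\infty$ by $1+\|\nabla w_2\|_{L^\infty}$ and applies the log-Lipschitz estimate to the whole field. The mechanism is the same; your split has the small advantage of showing that the logarithmic loss comes only from the pressure potential.
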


\begin{proof}
We first recall that
\[
\mu\Delta w_2+(\lambda+\mu)\nabla\dive w_2=\rho_2\dot u_2.
\]
If we denote by $\mathcal L:=\mu\Delta+(\lambda+\mu)\nabla\dive$, then $\mathcal L$ is a second-order elliptic operator, since $\mu>0$ and $\nu=\lambda+2\mu>0$. Standard $L^p$ elliptic estimates therefore give, for every $2\le p<\infty$,\footnote{The elliptic estimates for the Lam\'e operator $\mathcal L$ follow from the conditions $\mu>0$ and $\nu>0$. Indeed, if $\mathcal L f=g$, and if $\mathbb P$ denotes the Leray projector and $\mathcal Q:=I-\mathbb P$, then
\[
\mathcal L=\mu\Delta\mathbb P+\nu\Delta\mathcal Q.
\]
Equivalently,
\[
\mu\Delta\mathbb P f=\mathbb P g,
\qquad
\nu\Delta\mathcal Q f=\mathcal Q g.
\]
Thus the solenoidal and gradient components satisfy elliptic equations with coefficients $\mu$ and $\nu$, respectively.}
\[
\|\nabla^2w_2\|_{L^p}\lesssim \|\rho_2\dot u_2\|_{L^p},
\]
which proves the second estimate in \eqref{eq:elliptic-w}. Taking $p=2$ in the previous estimate, and using the fact that $\nabla w_2$ has zero mean on the torus, Poincar\'e's inequality gives
\[
\|\nabla w_2\|_{H^1}
\lesssim
\|\nabla^2w_2\|_{L^2}
\lesssim
\|\rho_2\dot u_2\|_{L^2}.
\]
Hence, by the two-dimensional Sobolev embedding, for every $2\le p<\infty$,
\[
\|\nabla w_2\|_{L^p}
\lesssim\|\nabla w_2\|_{H^1}
\lesssim\|\rho_2\dot u_2\|_{L^2},
\]
which proves the first estimate in \eqref{eq:elliptic-w}.

We now prove \eqref{eq:w2-time-Linfty}. Since we only need one time exponent strictly larger than one, we fix the spatial exponent $p=4$. Using the elliptic estimate above, the uniform bound on $\rho_2$, the Sobolev embedding and \eqref{eq:acceleration-Poincare}, we obtain
\[
\|\nabla^2w_2\|_{L^4}
\lesssim
\|\rho_2\dot u_2\|_{L^4}
\lesssim
\|\dot u_2\|_{H^1}
\lesssim
\|\nabla\dot u_2\|_{L^2}.
\]
Together with the previous $H^1$ estimate, this gives
\[
\|\nabla w_2\|_{W^{1,2}}
\lesssim
\|\sqrt{\rho_2}\dot u_2\|_{L^2},
\qquad
\|\nabla w_2\|_{W^{1,4}}
\lesssim
\|\nabla\dot u_2\|_{L^2}.
\]
Interpolating between $W^{1,2}$ and $W^{1,4}$ yields
\[
\|\nabla w_2\|_{W^{1,8/3}}
\lesssim
\|\sqrt{\rho_2}\dot u_2\|_{L^2}^{1/2}
\|\nabla\dot u_2\|_{L^2}^{1/2}.
\]
Since $8/3>2$, the embedding $W^{1,8/3}(\T)\hookrightarrow L^\infty(\T)$ gives
\begin{equation}\label{eq:w2-pointwise-Linfty}
\|\nabla w_2(t)\|_{L^\infty}
\lesssim
t^{-1/4}
\|\sqrt{\rho_2}\dot u_2(t)\|_{L^2}^{1/2}
\left(\sqrt t\,\|\nabla\dot u_2(t)\|_{L^2}\right)^{1/2}.
\end{equation}
By \eqref{eq:stronger-u2-regularity}, the product
\[
\|\sqrt{\rho_2}\dot u_2\|_{L^2}^{1/2}
\left(\sqrt t\,\|\nabla\dot u_2\|_{L^2}\right)^{1/2}
\]
belongs to $L^2_{\rm loc}(0,\infty)$, while $t^{-1/4}\in L^3(0,T)$ for every $T>0$. H\"older's inequality therefore gives
\[
\nabla w_2\in L^{6/5}(0,T;L^\infty(\T)),
\]
so that \eqref{eq:w2-time-Linfty} holds, for instance, with $r_*=6/5$.

It remains to prove the log-Lipschitz estimate. Since $\Pi_2$ is a gradient field,
\[
\operatorname{curl} u_2=\operatorname{curl} w_2,
\qquad
\dive u_2=\dive w_2+\frac1\nu(P_2-\bar P_2).
\]
Since $P_2$ is uniformly bounded,
\[
\|\operatorname{curl} u_2\|_{L^\infty}
+
\|\dive u_2\|_{L^\infty}
\lesssim
1+\|\nabla w_2\|_{L^\infty}.
\]
Arguing as in \cite[Corollary 1.1]{DM2023}, the standard log-Lipschitz estimate gives
\[
|u_2(t,x)-u_2(t,y)|
\lesssim
\left(
\|\dive u_2(t)\|_{L^\infty}
+
\|\operatorname{curl} u_2(t)\|_{L^\infty}
\right)
|x-y|\log\frac{e}{|x-y|},
\]
and \eqref{eq:u2-loglip} follows.
\end{proof}

\begin{remark}\label{rem:w2-better-integrability}
The exponent $r_*=6/5$ is chosen only to keep the proof simple and is not optimal. By interpolating between $W^{1,2}$ and $W^{1,q}$ for arbitrary $q>2$, and choosing the interpolation parameter sufficiently small, the same argument gives
\[
\nabla w_2\in L^r_{\rm loc}([0,\infty);L^\infty(\T))
\qquad
\text{for every }1\le r<2.
\]
Consequently, the log-Lipschitz seminorm of $u_2$ belongs to the same spaces, consistently with the $L^{2-\varepsilon}_tL^\infty_x$ regularity obtained in \cite{DM2023}.
\end{remark}
\subsection*{Acknowledgements}
\noindent
Lorenzo Pescatore gratefully acknowledges the partial support by the Gruppo Nazionale per l’Analisi Matematica, la Probabilità e le loro Applicazioni (GNAMPA) of the Istituto Nazionale di Alta Matematica (INdAM). Alessandro Violini is supported by the Deutsche Forschungsgemeinschaft (DFG) via the project “Inhomogeneous and compressible fluids: statistical solutions and dissipative anomalies” within the SPP 2410 “Hyperbolic Balance Laws in Fluid Mechanics: Complexity, Scales, Randomness” (CoScaRa).
\\
The first author is grateful to Gianluca Crippa as part of this work was carried out during L. Pescatore's visit to the Department of Mathematics and Computer Science at the University of Basel.
\bibliographystyle{plain}
\bibliography{biblio}

\end{document}